\begin{document}

\newcommand{\delbar}{\overline{\partial}}
\newcommand{\C}{\mathbb{C}}
\newcommand{\diff}{\mathrm{d}}
\newcommand{\Diff}{\mathrm{D}}
\newcommand{\dnorm}[2]{ \left|\left|#1\right|\right|_{#2}}
\newcommand{\snorm}[2]{ \left|#1\right|_{#2}}
\newcommand{\Ke}{K_{\varepsilon}}
\newcommand{\Kec}{K}
\newcommand{\KE}[2]{\frac{#1}{\pi(\overline{#2}+\varepsilon(#2))}}
\newcommand{\KEC}[2]{\frac{#1}{\pi(#2+\varepsilon(\overline{#2}))}}
\newcommand{\KEs}[2]{\frac{#1}{\pi(\overline{#2}+\varepsilon(#2))^2}}
\newcommand{\KECs}[2]{\frac{#1}{\pi(#2+\varepsilon(\overline{#2}))^2}}
\newcommand{\compint}[3]{\int_{#1} {#3}\,\mathrm{dA}(#2)}
\newcommand{\compintline}[3]{\int_{#1} {#3}\,\mathrm{d}#2}
\newcommand{\norm}[2]{\left|\left|#2\right|\right|_{#1}}
\newcommand{\e}{\mathrm{e}}

\newcommand{\del}{\partial}
\newcommand{\delb}{\overline{\partial}}
\newcommand{\Xb}{\overline{X}}
\newcommand{\Yb}{\overline{Y}}
\newcommand{\Db}{\overline{D}}
\newcommand{\tr}{\mathrm{tr}}

\newtheorem{theorem}{Theorem}
\newtheorem{lemma}[theorem]{Lemma}
\newtheorem{definition}[theorem]{Definition}
\newtheorem{proposition}[theorem]{Proposition}
\newtheorem{remark}[theorem]{Remark}
\newtheorem*{acknowledgements}{Acknowledgements}

\markboth{Authors' Names}{Instructions for Typing Manuscripts (Paper's Title)}

%
%
\title{$C^{\gamma}$ well-posedness of some non-linear transport equations}
\author{J.C. Cantero}
\date{}

\maketitle

\begin{abstract}
Given $k:\mathbb{R}^n\setminus\{0\} \to \mathbb{R}^n$ a kernel of class $C^2$ and homogeneous of degree $1-n$, we prove existence and uniqueness of H\"older regular solutions for some non-linear transport equations with velocity fields given by convolution of the density with $k$. The Aggregation, the 3D quasi geostrophic, and the 2D Euler equations can be recovered for particular choices of $k$. 
\bigskip

\noindent\textbf{AMS 2010 Mathematics Subject Classification:}  35Q35 (primary) 35F20, 35B30 (secondary).

\medskip

\noindent \textbf{Keywords:} non-linear transport equation, vortex patches, Euler equation, quasi-geostrophic equation, compressible flow.\end{abstract}

\section{Introduction}
Let $\rho(x,t)$ a scalar quantity usually known as the \textit{density} and let $v(x,t)$ a vector field called \textit{velocity} both  depending on the position $x\in \mathbb{R}^n$ and on the time $t\in \mathbb{R}$ . The \textit{(homogeneous) transport equation} for the pair ($\rho$,$v$) is the partial differential equation
defined by
\begin{equation}
\begin{cases}
\rho_t+v\cdot \nabla \rho = 0,\\
\rho(\cdot,0) = \rho_0.
\label{generaltreq}
\end{cases}
\end{equation}
Given a velocity field $v$ and a point $\alpha \in \mathbb{R}^n$ we set, whenever it is well defined, the \textit{flow map}
\begin{equation*}
\begin{split}
X(\alpha,\cdot): \mathbb{R} &\rightarrow \mathbb{R}^n, \\
t &\rightarrow X(\alpha,t)
\end{split}
\end{equation*}
as the solution of the ordinary differential equation
\begin{equation}
\begin{cases}
\frac{\diff }{\diff t} X(\alpha,t)=v(X(\alpha,t),t),\\
X(\alpha,0) = \alpha.
\label{odeflow}
\end{cases}
\end{equation}
This map indicates the position at time $t$ of the particle that was initially at $\alpha$ and that has moved following the velocity field at every moment. It is also called the \textit{trajectory} of the particle initially at $\alpha$. If $\rho(\cdot,t)$ is smooth enough, then a straightforward computation allow us to check that $\rho(X(\alpha,t),t)  = \rho_0(\alpha). $
That means that the density at time 0 and at position $\alpha$ takes the same value as the density evaluated at $t$ and at the future position of $\alpha$ at time $t$. So, it can be said that $\rho$ is transported with the flow defined by the velocity field $v$. This is a good reason to call \eqref{generaltreq} a  \textit{transport equation}.

For a fixed time, the functions $\rho$ and $v$ in \eqref{generaltreq} can be related by some functional $T$ so that $v(\cdot,t) = T(\rho(\cdot,t))$ and often $T$ can be expressed as a convolution with a given kernel. In this situation, the equation is not lineal for sure.  The most important example of a transport equation of this kind is the Euler equation in the plane. Let $N(x) = \frac{1}{2\pi}\ln(\left|x\right|)$ be the fundamental solution of the Laplacian   and let 
$$K_{BS}(x_1,x_2) = \frac{1}{2\pi \left|x\right|^2}(-x_2,x_1) = \nabla^{\perp} N(x).$$
We call $K_{BS}$ the Biot-Savart kernel. Let $\omega(\cdot,t)$ denote the vorticity, which is the scalar curl of the velocity field $v(x,t)$, i.e., 
$$\omega(x,t) = \partial_1 v_2(x,t)-\partial_2 v_1(x,t).$$ 
Then the vorticity formulation of the Euler equation is
$$\begin{cases}
\omega_t + v\cdot \nabla \omega = 0,\\
v(\cdot,t) = K_{BS}\ast \omega(\cdot,t), \\
\omega(\cdot,0) = \omega_0,
\end{cases}$$
which is a non-linear transport equation for $(\omega,v)$.

Another example in $\mathbb{R}^n$ (see \cite[Section 4.2]{bll} for more details of its derivation) is the aggregation equation when the initial condition $\rho_0$ is the characteristic function of some domain $D_0$, $\chi_{D_0}$. We will call solutions for this type of initial data  \textit{density patches}.  Let $w_n$ the volume of the $n$-dimensional unit ball and set 
\begin{equation}
N(x)=-\frac{1}{n(n-2)w_n} \frac{1}{\left|x\right|^{n-2}},\,\,\,\,\,n\ge 3,
\label{fundlapl}
\end{equation}
 the fundamental solution of the Laplacian in $\mathbb{R}^n$.  In this case, given \linebreak$K_{Ag}=-\nabla N$ we get
$$\begin{cases}
\rho_t + v\cdot \nabla \rho = 0,\\
v(\cdot,t) = K_{Ag}\ast \rho(\cdot,t), \\
\rho(\cdot,0) = \rho_0=\chi_{D_0}.
\end{cases}$$


We would like to remark that the original aggregation equation is not a transport equation. The equivalence with the equation above is just for the initial condition equal to the characteristic function of a domain, as mentioned above.

In the spirit of generalizing these example equations we will consider throughout this paper a kernel $k:\mathbb{R}^n\setminus\{0\} \to \mathbb{R}^n$, homogeneous of degree $-(n-1)$ and of class $C^2(\mathbb{R}^n\setminus\{0\}).$ That is, for such $k$, we consider the following general transport equation:
\color{black}

$$\begin{cases}
\rho_t + v\cdot \nabla \rho = 0,\\
v(\cdot,t) = k\ast \rho(\cdot,t), \\
\rho(\cdot,0) = \rho_0=\chi_{D_0}.
\end{cases}$$

Our goal is to prove a well-posedness result for the transport equation and for the kernel $k$ in some space of functions that will be defined in a moment. We would like to anticipate that the divergence of $v$ is an important quantity appearing in the computations and  in the proofs we are going to develop.  For the Euler equation the divergence vanishes everywhere for any time and for the aggregation equation the divergence at a given time $t$ is equal to $-\rho(\cdot,t)$. Owing to the special simple form of the divergence the proofs of well-posedness in the above two cases are relatively fluent.

The well-posedness will be proved in spaces of H\"older smooth functions. We define them now.
\begin{definition}
Given $0<\gamma<1$ and $f:\mathbb{R}^n \to \mathbb{R}$ let
$$\left|\left|f\right|\right|_{L^{\infty}}= \sup_{x\in \mathbb{R}^n} \left|f(x)\right|\,\,\,\,\text{and}\,\,\,\, \left|f\right|_{\gamma}=  \sup_{\stackrel{x,y\in\mathbb{R}^n}{x\neq y}} \frac{\left|f(x)-f(y)\right|}{\left|x-y\right|^{\gamma}}.$$
We define the norm
$$\left|\left|f\right|\right|_{\gamma}:= \left|\left|f\right|\right|_{L^{\infty}} + \left|f\right|_{\gamma}.$$

For $F:\mathbb{R}^n\to \mathbb{R}^d$, $x\to F(x) = (f_1(x),\ldots,f_d(x))$, we define
$$\left|\left|F\right|\right|_{\gamma} := \sup_{i=1,\ldots,d} \left|\left|f_i\right|\right|_{\gamma}$$
and then the space 
$$C^{\gamma}(\mathbb{R}^n;\mathbb{R}^d)=\left\{f:\mathbb{R}^n\to \mathbb{R}^d: \left|\left|f\right|\right|_{\gamma} < \infty     \right\} .$$

Finally, we define
$$\left|F\right|_{1,\gamma} = \left|F(0)\right|+ \left|\left|\nabla F\right|\right|_{L^{\infty}} +  \left|\nabla F\right|_{\gamma},$$
where
$$\left|\left|\nabla F\right|\right|_{L^{\infty}}=\sup_{i=1,\ldots,d}\left(\sup_{j=1,\ldots,n}\left|\left|\frac{\partial}{\partial x_j} f_i \right|\right|_{L^{\infty}}\right), $$
$$\left|\nabla F\right|_{\gamma}=\sup_{i=1,\ldots,d}\left(\sup_{j=1,\ldots,n}\left|\frac{\partial}{\partial x_j} f_i\right|_{\gamma}\right). $$

We define the H\"older space $C^{1,\gamma}(\mathbb{R}^n;\mathbb{R}^d)$ as
$$C^{1,\gamma}(\mathbb{R}^n;\mathbb{R}^d)=\left\{f:\mathbb{R}^n\to \mathbb{R}^d: \left|\left|f\right|\right|_{1,\gamma} < \infty     \right\} .$$
When it is clear enough, we will just write $C^{1,\gamma}$.

Additionally, we define $C^{\gamma}_c$ as the space of functions in $C^{\gamma}$ which are also compactly supported.
\label{defholder}
\end{definition}

We are ready to anticipate the main theorem of this paper.

\begin{theorem}
Let $N$ the fundamental solution of the Laplacian in $\mathbb{R}^n$. Let $k\in C^2(\mathbb{R}^n\setminus\{0\};\mathbb{R}^n)$ a kernel homogeneous of degree $1-n$. For $0<\gamma<1$, if $\rho_0 \in C^{\gamma}_c (\mathbb{R}^n,\mathbb{R})$, then the transport equation
$$ 
\begin{cases}
\rho_t+v\cdot \nabla \rho = 0,\\
v(\cdot,t)=k\ast \rho(\cdot,t),\\
\rho(\cdot,0)=\rho_0,
\end{cases}
$$
has a unique weak solution $\rho(\cdot,t)\in C^{\gamma}_c(\mathbb{R}^n,\mathbb{R})$ for any time $t\in\mathbb{R}$.
\label{mainthm}
\end{theorem}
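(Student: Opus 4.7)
The plan is to pass to the Lagrangian formulation via the method of characteristics and run a Picard fixed-point argument on the flow map, in the spirit of the classical H\"older-data treatments of 2D Euler and of the aggregation patch problem. Along any solution one has $\rho(X(\alpha,t),t)=\rho_0(\alpha)$, so constructing a weak solution $\rho(\cdot,t)\in C^{\gamma}_c$ amounts to constructing a family of flow maps $X(\cdot,t):\mathbb{R}^n\to\mathbb{R}^n$ solving the closed coupled system
$$\frac{\diff}{\diff t}X(\alpha,t)=v(X(\alpha,t),t),\qquad v(\cdot,t)=k\ast\big(\rho_0\circ X(\cdot,t)^{-1}\big),$$
with $X(\alpha,0)=\alpha$. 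I would look for $X-\mathrm{Id}$ in a closed ball of $C\big([0,T];C^{1,\gamma}(\mathbb{R}^n;\mathbb{R}^n)\big)$ and introduce the operator $\mathcal{F}$ that sends a candidate flow $X$ to the flow driven by the velocity $v$ it generates.

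The analytic heart of the argument is a single kernel estimate: for every $\rho\in C^{\gamma}_c$,
$$\dnorm{v}{L^\infty}+\dnorm{\nabla v}{L^\infty}+\snorm{\nabla v}{\gamma}\le C\,\dnorm{\rho}{\gamma}\,\big(1+\mathrm{diam}(\mathrm{supp}\,\rho)\big)^{N},$$
for some $N=N(n,\gamma)$. The $L^\infty$ bound follows from the local integrability of $k$ (homogeneity $1-n$) together with compact support of $\rho$. For the gradient bound one differentiates $v=k\ast\rho$ distributionally: since $\partial_j k_i$ is $C^{1}$ on $S^{n-1}$ and homogeneous of degree $-n$, it decomposes as a principal-value Calder\'on--Zygmund kernel plus a Dirac mass,
$$\partial_j k_i=\mathrm{p.v.}\,\Omega_{ij}+c_{ij}\,\delta,\qquad\int_{S^{n-1}}\Omega_{ij}\,\diff\sigma=0,$$
and one invokes the classical $C^{\gamma}\to C^{\gamma}$ boundedness of Calder\'on--Zygmund operators with $C^{1}$ symbol (valid for $0<\gamma<1$). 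Combined with the composition bound $\dnorm{\rho_0\circ X^{-1}}{\gamma}\le \dnorm{\rho_0}{\gamma}\big(1+\dnorm{\nabla X^{-1}}{L^\infty}^{\gamma}\big)$, this transfers control on the flow to control on the pushed-forward density and hence on the new velocity.

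With these two estimates in hand, a standard Picard computation shows that $\mathcal{F}$ is a contraction on the chosen ball for $T$ small enough in terms of $\dnorm{\rho_0}{\gamma}$ and $\mathrm{diam}(\mathrm{supp}\,\rho_0)$, giving local-in-time existence and uniqueness. To extend to all $t\in\mathbb{R}$ I would close a Gr\"onwall loop on
$$M(t):=\dnorm{\nabla X(\cdot,t)}{L^\infty}+\dnorm{\nabla X(\cdot,t)^{-1}}{L^\infty},$$
differentiating $\partial_{t}\nabla X=(\nabla v)(X)\nabla X$ and bounding $\dnorm{\nabla v}{L^\infty}$ by the kernel estimate applied to $\rho(\cdot,t)=\rho_0\circ X(\cdot,t)^{-1}$, whose $C^{\gamma}$ norm depends only on $\dnorm{\rho_0}{\gamma}$ and $M(t)$. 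Since $\dnorm{v}{L^\infty}$ stays bounded, the support of $\rho(\cdot,t)$ grows at most linearly in $t$, the right-hand side of the Gr\"onwall inequality is locally integrable, and $M(t)$ remains finite on every compact time interval. Global uniqueness follows from the same argument applied to the difference of two candidate flows.

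The main obstacle, and what prevents the statement from being a routine transcription of the Euler or aggregation arguments, is the absence of any special structure on $k$. In those two cases $k$ is (perpendicular to) the gradient of the fundamental solution of the Laplacian, so $\mathrm{div}\,v$ is either zero or a pointwise multiple of $\rho$ and the non-local part of $\nabla v$ admits a geometric description that eliminates several error terms by hand. For a generic $C^{2}$ homogeneous $k$ the constants $c_{ij}$ can be arbitrary and the $\Omega_{ij}$ carry no such simplification; one must push through the full Calder\'on--Zygmund estimate on H\"older spaces and verify that the resulting bound depends on $\rho$ only through $\dnorm{\rho}{\gamma}$ and $\mathrm{diam}(\mathrm{supp}\,\rho)$, uniformly in the truncation parameter used to define the principal value. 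Making this estimate robust enough to close both the local contraction and the global Gr\"onwall argument is the step I expect to require the most care.
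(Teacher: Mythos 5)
Your local-in-time half follows the same general route as the paper (Lagrangian reformulation plus a Picard argument for the flow in $C^{1,\gamma}$), but the step you dismiss as ``a standard Picard computation'' is where the real work sits. To make your map $\mathcal{F}$ Lipschitz (equivalently, to bound the directional derivative of the velocity functional, as the paper does) you must compare $\nabla v[X_1]$ and $\nabla v[X_2]$ in $C^{\gamma}$, and this produces the hypersingular kernels $\partial_k\partial_i k_j$, homogeneous of degree $-n-1$, not merely the first derivatives $\partial_j k_i=\mathrm{p.v.}+c_{ij}\delta$ that you decompose. These second-derivative terms are controllable only because they act against the difference $Y(\alpha)-Y(\alpha')$ of flows; the paper devotes Lemma \ref{derofI} and Lemma \ref{lemmahyper} precisely to this compensation, and, since $\mathrm{div}\,v\neq 0$ here, the Jacobian $\det DX$ must also be differentiated (Lemma \ref{lemmadet}). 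Your outline never engages with any of this, so the contraction claim is asserted rather than proved, although it can be repaired along the paper's lines.

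The decisive gap is in the global argument. Your only quantitative input is $\|\nabla v\|_{L^{\infty}}\le C\|\rho\|_{\gamma}\,(1+\mathrm{diam}\,\mathrm{supp}\,\rho)^{N}$, and along the flow $\|\rho(\cdot,t)\|_{\gamma}\lesssim \|\rho_0\|_{\gamma}\,M(t)^{\gamma}$, so your Gr\"onwall loop reads $M'(t)\le C(t)\,M(t)^{1+\gamma}$. This is a superlinear differential inequality: it does not exclude finite-time blow-up of $M$, so ``$M(t)$ remains finite on every compact time interval'' does not follow, and globalness is exactly what fails. What closes the loop in the paper (Proposition \ref{bkm0}) is the sharper estimate \eqref{4546b} combined with the fact that $\|\rho(\cdot,t)\|_{L^{\infty}}=\|\rho_0\|_{L^{\infty}}$ is conserved by transport: choosing the truncation parameter as $|\rho(\cdot,t)|_{\gamma}^{-1/\gamma}$ gives $\|\nabla v(\cdot,t)\|_{L^{\infty}}\le c\{1+\|\rho_0\|_{L^{\infty}}\ln[R(t)\,|\rho(\cdot,t)|_{\gamma}^{1/\gamma}]\}$, i.e.\ only a \emph{logarithmic} dependence on the H\"older seminorm; since $R(t)$ and $|\rho(\cdot,t)|_{\gamma}$ are at most exponential in $\int_0^t\|\nabla v(\cdot,s)\|_{L^{\infty}}\,\diff s$, the logarithm linearizes the inequality to $\|\nabla v(\cdot,t)\|_{L^{\infty}}\le c+c\int_0^t\|\nabla v(\cdot,s)\|_{L^{\infty}}\,\diff s$, giving an exponential a priori bound and then global existence via the continuation theorem. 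Relatedly, your claim that $\|v\|_{L^{\infty}}$ ``stays bounded'' (hence linear growth of the support) is unjustified when $\mathrm{div}\,v\neq 0$: one only has $\|v(\cdot,t)\|_{L^{\infty}}\lesssim R(t)\|\rho_0\|_{L^{\infty}}$, which yields exponential control of the support via Lemma \ref{lemmasupport} — sufficient, but it must be argued, and it cannot rescue the superlinear Gr\"onwall inequality above. You need to replace your linear-in-$\|\rho\|_{\gamma}$ gradient bound by the logarithmic one for the stated theorem to hold globally in time.
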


The reason we have chosen this space is double. Firstly, the result was proved for the Euler equation (see \cite[Chapter 4]{MB}) and  for the Aggregation kernel (see \cite[Theorem 5.3]{cozzi}). Secondly, we wanted to be sure about having well-posedness in the smooth case before moving to other situations (for instance, density patches). In fact, if one wants to deal with the Yudovich problem, that is, proving well-posedness for an initial data in $L^1\cap L^{\infty}$, a strategy would be first to smoothen the initial data via convolution with a mollifier, then apply the smooth Theorem presented here, check some compactness properties, take limit of the solutions of the mollified equations and verify that they indeed satisfy the original equation. So, the result presented here can be seen as the first necessary step to prove  existence of weak solutions of the equation.

The fact that the divergence is as general as the situation envisaged allows makes the known argument much more involved; some differences arise and overcoming them requires often a delicate treatment.  We will stress this fact in the next sections whenever those differences appear. 


\subsection{Outline of the paper}
The present paper is structured as follows. In Section \ref{prel} we give some preliminary results on H\"older spaces and on Calderón-Zygmund Operators (CZO) acting on them. In Section \ref{localthm} we prove a local-in-time version of Theorem \ref{mainthm}. In Section \ref{globalness} we prove that this local solution is actually global via appropiate a priori estimates.

\section{Preliminaries}
\label{prel}

First of all, we have the following elementary properties for elements of the H\"older spaces.

\begin{lemma}
Let $f, g$ be $C^{\gamma}$ functions, $0<\gamma<1$. Then
\begin{align}
\left|fg\right|_{\gamma} &\le \left|\left|f\right|\right|_{L^{\infty}}\left|g\right|_{\gamma} + \left|f\right|_{\gamma}\left|\left|g\right|\right|_{L^{\infty}}, \label{416}\\
\left|\left|fg\right|\right|_{\gamma} &\le\left|\left|f\right|\right|_{\gamma} \left|\left|g\right|\right|_{\gamma}  .\label{417}
\end{align}
If moreover $X$ is a smooth invertible transformation in $\mathbb{R}^n$ satisfying
$$\left|\det \nabla X(\alpha)\right| \ge c_1 >0, $$
 then there exists $c>0$ such that
\begin{align}
\left|\left|(\nabla X)^{-1}\right|\right|_{\gamma}  &\le c \left|\left|\nabla X\right|\right|_{\gamma}^{2n-1}, \label{418}\\
\left|X^{-1}\right|_{1,\gamma} &\le c \left|X\right|_{1,\gamma}^{2n-1} \label{419} \\
\left|f\circ X\right|_{\gamma} &\le \left|f\right|_{\gamma} \left|\left|\nabla X\right|\right|_{L^{\infty}}^{\gamma} \label{420}\\
\left|\left|f\circ X\right|\right|_{\gamma} &\le \left|\left|f\right|\right|_{\gamma} (1+\left|X\right|_{1,\gamma}^{\gamma}), \label{421} \\
\left|\left|f\circ X^{-1}\right|\right|_{\gamma} &\le  \left|\left|f\right|\right|_{\gamma} (1+\left|X\right|_{1,\gamma}^{\gamma(2n-1)}).
\label{422}
\end{align}
\label{propcgamma}
\end{lemma}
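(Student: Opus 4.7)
The inequalities (\ref{416}) and (\ref{417}) are purely algebraic. For (\ref{416}) I would write $f(x)g(x)-f(y)g(y)=f(x)(g(x)-g(y))+g(y)(f(x)-f(y))$, take absolute values, and divide by $|x-y|^\gamma$. For (\ref{417}) one adds the pointwise bound $\|fg\|_{L^\infty}\le \|f\|_{L^\infty}\|g\|_{L^\infty}$ to (\ref{416}) and regroups. The composition estimates (\ref{420}) and (\ref{421}) come from the mean value theorem: since $|X(x)-X(y)|\le \|\nabla X\|_{L^\infty}|x-y|$, one gets $|f(X(x))-f(X(y))|\le |f|_\gamma \|\nabla X\|_{L^\infty}^\gamma |x-y|^\gamma$, which is (\ref{420}); combining this with $\|f\circ X\|_{L^\infty}=\|f\|_{L^\infty}$ and $\|\nabla X\|_{L^\infty}\le |X|_{1,\gamma}$ yields (\ref{421}).

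The matrix-inverse estimate (\ref{418}) is the core of the lemma. I would use Cramer's formula
\[
(\nabla X)^{-1}=\frac{1}{\det\nabla X}(\operatorname{cof}\nabla X)^{T}.
\]
Each entry of $\operatorname{cof}\nabla X$ is a polynomial of degree $n-1$ in the entries of $\nabla X$, so iterating (\ref{417}) gives $\|\operatorname{cof}\nabla X\|_\gamma \le c\|\nabla X\|_\gamma^{n-1}$, and similarly $\|\det\nabla X\|_\gamma \le c\|\nabla X\|_\gamma^{n}$. Because $|\det\nabla X|\ge c_1$, the elementary inequality $|1/g(x)-1/g(y)|\le c_1^{-2}|g(x)-g(y)|$ turns the latter into $\|(\det\nabla X)^{-1}\|_\gamma \le c\|\nabla X\|_\gamma^{n}$. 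One final application of (\ref{417}) yields the exponent $n-1+n=2n-1$.

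For (\ref{419}) the chain rule gives $\nabla X^{-1}(y)=(\nabla X)^{-1}(X^{-1}(y))$, whence $\|\nabla X^{-1}\|_{L^\infty}\le \|(\nabla X)^{-1}\|_{L^\infty}$ is controlled by (\ref{418}). Applying (\ref{420}) componentwise to $(\nabla X)^{-1}\circ X^{-1}$ bounds $|\nabla X^{-1}|_\gamma$ by $|(\nabla X)^{-1}|_\gamma\,\|\nabla X^{-1}\|_{L^\infty}^\gamma$, and both factors are polynomially controlled by $|X|_{1,\gamma}$; absorbing the spurious $\gamma$-exponents into the leading $2n-1$ power — using that $|X|_{1,\gamma}\ge \|\nabla X\|_{L^\infty}\ge c_1^{1/n}$ is uniformly bounded below — produces (\ref{419}). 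Finally (\ref{422}) is obtained by applying (\ref{421}) with $X^{-1}$ in place of $X$ and plugging in the bound (\ref{419}).

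The delicate step will be (\ref{418})--(\ref{419}): the cofactor/determinant bookkeeping of constants and exponents is easy to misstate, and the self-referential nature of (\ref{419}) — controlling $\nabla X^{-1}$ in terms of quantities that themselves involve $X^{-1}$ — is precisely what forces the exponent $2n-1$, rather than the naive $n-1$ one would read off from Cramer's rule alone.
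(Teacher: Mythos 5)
The paper does not actually prove Lemma \ref{propcgamma}: it cites [MB], Lemmas 4.1--4.3. Your reconstruction follows the same standard route, and for \eqref{416}, \eqref{417}, \eqref{420}, \eqref{421}, and \eqref{418} it is correct: add-and-subtract for the products, the mean value theorem for compositions, and Cramer's rule together with the lower bound on $|\det\nabla X|$ for the matrix inverse.

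The flaw is in your treatment of \eqref{419}. Writing $\nabla X^{-1}=(\nabla X)^{-1}\circ X^{-1}$ and applying \eqref{420} componentwise gives $|\nabla X^{-1}|_\gamma\le |(\nabla X)^{-1}|_\gamma\,\|\nabla X^{-1}\|_{L^\infty}^{\gamma}$; feeding in \eqref{418} for both factors, the exponent you actually obtain is $(2n-1)(1+\gamma)$, strictly larger than $2n-1$. Your proposed fix, absorbing the extra $\gamma$-power ``using that $|X|_{1,\gamma}$ is uniformly bounded below,'' goes the wrong way: a lower bound on a quantity $t$ lets you dominate $t^{a}$ by a constant times $t^{b}$ only when $a\le b$, whereas here $a=(2n-1)(1+\gamma)>2n-1=b$, and $|X|_{1,\gamma}$ has no a priori upper bound over the class of admissible $X$. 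So as written your argument does not deliver the exponent stated in \eqref{419}. Note also that \eqref{422} should not be derived via \eqref{419}, since that would inherit the inflated exponent; instead apply \eqref{420} with $X^{-1}$ in place of $X$ and use $\|\nabla X^{-1}\|_{L^\infty}=\|(\nabla X)^{-1}\|_{L^\infty}\le \|(\nabla X)^{-1}\|_{\gamma}\le c\,\|\nabla X\|_{\gamma}^{2n-1}$ from \eqref{418}, together with $\|f\circ X^{-1}\|_{L^\infty}=\|f\|_{L^\infty}$, which yields $\|f\circ X^{-1}\|_{\gamma}\le c\,\|f\|_{\gamma}\bigl(1+|X|_{1,\gamma}^{\gamma(2n-1)}\bigr)$ directly and bypasses the problematic step.
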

The proof of  Lemma \ref{propcgamma} can be found in \cite[p. 159]{MB} (see Lemmas 4.1, 4.2 and 4.3). Note that \eqref{417} implies that $C^{\gamma}$ is an algebra.

We also have the following bounds for Calderón-Zygmund operators acting on H\"older spaces. They will be used repeatedly in the proofs developed in the upcoming sections. 
\begin{lemma}
Let $k:\mathbb{R}^n\to \mathbb{R}$, $k\in C^2(\mathbb{R}^n\setminus\{0\})$ a kernel homogeneous  of degree $1-n$. That is, 
\begin{align}
&k(\lambda x) = \frac{1}{\lambda^{n-1}} k(x),\,\,\,\,\,\forall \,\lambda >0. \,\,\forall x\neq 0, 
\label{charker1}
\end{align}
Let $P=\partial_i k$, $i=1,\ldots,n$. Set 
$$ Tf(x) = \int_{\mathbb{R}^n} k(x-x')f(x')\,\diff x'; \phantom{heyy} Sf(x) = \text{p.v.}\int_{\mathbb{R}^n} P(x-x')f(x')\,\diff x' .$$
For $0<\gamma<1$ let $f\in C_c^{\gamma}(\mathbb{R}^n;\mathbb{R})$. Set  $R^n:=m(\text{supp}(f))<\infty$, that is, the measure of the support of $f$. Then, there exists a constant $c$, independent of $f$ and $R$, such that
\begin{align}
\left|\left|Tf\right|\right|_{L^{\infty}} &\le c R \left|\left|f\right|\right|_{L^{\infty}}, \label{4546a} \\
\left|\left|Sf\right|\right|_{L^{\infty}} &\le c\left\{\left|f\right|_{\gamma} \varepsilon^{\gamma}+ \max\left(1,\ln \frac{R}{\varepsilon}\right) \left|\left|f\right|\right|_{L^{\infty}}\right\},\,\,\,\,\,\forall \varepsilon >0, \label{4546b}\\
\left|Sf\right|_{\gamma} &\le c \left|f\right|_{\gamma}. \label{4546c}
\end{align} 
\label{lemmas4546}
\end{lemma}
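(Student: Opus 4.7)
The plan rests on two structural facts about $P$: the pointwise bounds $|P(z)|\le c|z|^{-n}$ and $|\nabla P(z)|\le c|z|^{-n-1}$ (from the homogeneity of degree $-n$ and the $C^{1}$-smoothness of $k$ on $S^{n-1}$), and the cancellation identity $\int_{S^{n-1}}P(\theta)\,d\sigma(\theta)=0$. The latter follows by applying the divergence theorem to $k$ on an annulus and invoking the homogeneity of $k$; as a consequence, $\text{p.v.}\int_{B(x_{0},r)}P(x_{0}-x')\,dx'=0$ for every $x_{0}$ and $r>0$, and $S$ annihilates constants in the principal-value sense.

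For \eqref{4546a}, the pointwise bound $|k(z)|\le c|z|^{1-n}$ reduces matters to controlling $\int_{\text{supp}(f)}|x-x'|^{1-n}\,dx'$, which by the rearrangement inequality is maximized when $\text{supp}(f)$ is replaced by the ball of volume $R^{n}$ centered at $x$ (radius $\sim R$), yielding $\sim R$ in polar coordinates. For \eqref{4546b}, I split $Sf(x)$ at the scale $|x-x'|=\varepsilon$. The inner piece becomes $\int_{|x-x'|<\varepsilon}P(x-x')(f(x')-f(x))\,dx'$ after exploiting the cancellation around $x$, and is bounded by $c|f|_{\gamma}\varepsilon^{\gamma}$ via the integrable singularity $|z|^{\gamma-n}$. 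The outer piece is bounded by $\|f\|_{L^{\infty}}$ times the integral of $|z|^{-n}$ over a set of measure $\le R^{n}$ lying outside $B(0,\varepsilon)$; rearranging to an annulus $\{\varepsilon\le|z|\le\rho\}$ with $\rho\sim\max(\varepsilon,R)$ produces $c\max(1,\ln(R/\varepsilon))$.

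The Hölder estimate \eqref{4546c} is the delicate one. Set $d=|x-y|$ and $\bar{x}=(x+y)/2$. Using that $S$ annihilates constants, I subtract $f(\bar{x})$ inside both $Sf(x)$ and $Sf(y)$, so
$$Sf(x)-Sf(y)=\text{p.v.}\int\bigl[P(x-x')-P(y-x')\bigr]\bigl(f(x')-f(\bar{x})\bigr)\,dx',$$
and split at $|x-x'|=2d$. In the far region $|x-x'|>2d$ the kernel difference is controlled by $cd|x-x'|^{-n-1}$ (mean value theorem plus the gradient bound on $P$); combined with $|f(x')-f(\bar{x})|\le|f|_{\gamma}|x-x'|^{\gamma}$ and $\int_{2d}^{\infty}r^{\gamma-2}\,dr=cd^{\gamma-1}$ (convergent since $\gamma<1$), this yields $c|f|_{\gamma}d^{\gamma}$. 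In the near region $|x-x'|\le 2d$ I treat the two kernels separately: the $P(x-x')$ piece uses the cancellation around $x$ to reduce to $\int P(x-x')(f(x')-f(x))\,dx'$, bounded as in \eqref{4546b}; the $P(y-x')$ piece is split via $f(x')-f(\bar{x})=(f(x')-f(y))+(f(y)-f(\bar{x}))$, whose first summand is integrated by Hölder regularity over $B(y,3d)$ and whose second summand produces the scalar $(f(y)-f(\bar{x}))\cdot\text{p.v.}\int_{B(x,2d)}P(y-x')\,dx'$. The remaining p.v.\ integral is $O(1)$: after substitution it equals $\text{p.v.}\int_{B(y-x,2d)}P(z)\,dz$, which differs from $\text{p.v.}\int_{B(0,2d)}P(z)\,dz=0$ only by a crescent of measure $\sim d^{n}$ on which $|P|\le cd^{-n}$.

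The main obstacle is precisely this last scalar piece. A naive estimate $|f(y)|\cdot O(1)\le c\|f\|_{L^{\infty}}$ would produce a term of the wrong type; the symmetric choice $\bar{x}=(x+y)/2$, rather than $x$ or $y$, is what guarantees that the coefficient $f(y)-f(\bar{x})$ has size $c|f|_{\gamma}d^{\gamma}$, so that the final bound depends purely on $|f|_{\gamma}$.
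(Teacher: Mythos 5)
Your argument is correct, but it takes a genuinely different route from the paper, which does not reprove these estimates at all: it invokes Lemmas 4.5 and 4.6 of \cite{MB} and devotes Remark \ref{remarkJVcanc} solely to checking that the extra hypotheses assumed there (that $P=\partial_i k$ is homogeneous of degree $-n$ and has zero mean on the unit sphere) follow automatically from \eqref{charker1}, via Stokes' theorem on an annulus. You instead reconstruct the proof behind the cited lemmas: the same spherical cancellation (which you also derive from the divergence theorem on an annulus) drives your inner-ball reductions, and the $\varepsilon$-splitting for \eqref{4546b} together with the far/near splitting at $|x-x'|=2d$, the mean value theorem on the far region, and the crescent comparison of $\mathrm{p.v.}\int_{B(y-x,2d)}P$ with $\mathrm{p.v.}\int_{B(0,2d)}P=0$ for \eqref{4546c} is essentially the standard Calder\'on--Zygmund argument. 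What your version buys is self-containedness, and it makes explicit where the support bound $m(\mathrm{supp}(f))=R^n$ enters (only in \eqref{4546a} and in the logarithmic term of \eqref{4546b}, via the rearrangement step), which is precisely the dependence the paper exploits later. Two small points, neither affecting correctness: the global subtraction of $f(\bar{x})$ in your identity for $Sf(x)-Sf(y)$ needs a one-line justification that $\mathrm{p.v.}\int\bigl[P(x-x')-P(y-x')\bigr]\,\diff x'=0$, i.e.\ that the mismatch of the truncation balls at radius $N$ contributes $O(d/N)\to 0$ --- the same crescent estimate you already use at scale $d$; and the midpoint $\bar{x}=(x+y)/2$ is not what rescues the last scalar term, since subtracting $f(x)$ or $f(y)$ works equally well because $|f(x)-f(y)|\le |f|_{\gamma}d^{\gamma}$ (subtracting $f(y)$ even makes that term vanish identically), so the symmetric choice is a convenience rather than a necessity.
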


\begin{remark}
The proof of Lemma \ref{lemmas4546} can be found in \cite[pp. 159-163]{MB} (see Lemmas 4.5 and 4.6). There more hypothesis on the kernels $k$ and $P$ are required but we remark here that they are not needed. In particular, for $k$ as in the previous lemma, by differentiating with respect to $x_i$ the equation \eqref{charker1} it is clear that this derivative is homogeneous of degree $-n$. Also, we can see that $\partial_i k$ has zero mean integral on the sphere. Let $0<a<b$. By Stokes's theorem we can write
\begin{equation}
\begin{split}
\int_{a\le \left|x\right|\le b} &\partial_i k(x)\,\diff x = \\&=\int_{\partial B(0,b)} k(x) n_i(x)\,\diff \sigma(x) -  \int_{\partial B(0,a)} k(x) n_i(x)\,\diff \sigma(x),
\label{jc190302}
\end{split}
\end{equation}
where $n_i(x)$ is the $i$-th component of the unitary normal vector to each surface at the point $x$. By homogeneity of the kernel $k$ it is clear that the two integrals in the second line of \eqref{jc190302} are equal and then the difference is 0. By doing a hyperspherical coordinates change of variables and again by homogeneity of the kernel, the first line of \eqref{jc190302} can be written as 
$$\int_{a\le \left|x\right|\le b} \partial_i k(x)\,\diff x = (\log(b)-\log(a))\int_{\partial B(0,1)} \partial_i k(w)\,\diff \sigma(w), $$
and so we can conclude that 
$$\int_{\partial B(0,1)} \partial_i k(w)\,\diff \sigma(w)=0$$
as it is required in the mentioned proof done in \cite{MB}.
\label{remarkJVcanc}
\end{remark}


\section{Local Theorem}
\label{localthm}

As in the case of Euler equation, a good way to prove an existence and uniqueness result is by dealing with an, in some sense, equivalent equation rather than the one presented in Theorem \ref{mainthm}. Recall that $\rho$ is transported with the flow, i.e., $\rho(x,t)=\rho_0(X^{-1}(x,t))$ for $X^{-1}(\cdot,t)$ the inverse of the flow $X(\cdot,t)$. Therefore by \eqref{422} we have
$$\left|\left|\rho(\cdot,t)\right|\right|_{\gamma} \le \left|\left|\rho_0\right|\right|_{\gamma} \left(1+\left|X(\cdot,t)\right|_{1,\gamma}^{\gamma(2n-1)}\right). $$
Thus, $\rho(\cdot,t)\in C^{\gamma}$ provided $X(\cdot,t)\in C^{1,\gamma}$. 

Furthermore, eventually we will need to control the measure of the support of $\rho(\cdot,t)$. In order to do it, we have the next lemma, which will be also needed in the following section. Note that in the zero divergence case there is no need to control the support of $\rho(\cdot,t)$ since its measure is conserved with time and therefore it is equal to the measure of the support of $\rho_0$.

\begin{lemma}
Let $(\rho,v)$ be a  solution of \eqref{generaltreq} and let $X$ be the flow map associated to $v(\cdot,t)$ as in \eqref{odeflow}. Then
$$m(\text{supp}(\rho(\cdot,t))) \le c(n) m(\text{supp}(\rho_0))\left|\left|\nabla X(\cdot,t)\right|\right|_{L^{\infty}}^n. $$
\label{lemmasupport}
\end{lemma}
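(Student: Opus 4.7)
The plan is to exploit the fact that $\rho$ is transported by the flow $X(\cdot,t)$. Since $\rho(x,t) = \rho_0(X^{-1}(x,t))$, the support of $\rho(\cdot,t)$ coincides (up to null sets) with the image $X(\mathrm{supp}(\rho_0), t)$ of the initial support under the flow map at time $t$. So the problem reduces to bounding the Lebesgue measure of this image.

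First I would invoke the change of variables formula: writing $S_0 := \mathrm{supp}(\rho_0)$, we have
$$m(X(S_0,t)) = \int_{X(S_0,t)} \diff x = \int_{S_0} \left|\det \nabla X(\alpha,t)\right| \diff \alpha.$$
This requires the flow map $X(\cdot,t)$ to be a $C^1$-diffeomorphism of $\mathbb{R}^n$ onto itself, which is standard under the regularity assumptions on $v$ implicit in this section (Lipschitz-type bounds on $v$ guarantee the existence of a global flow, and smoothness of $v$ in $x$ propagates to $X$).

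Next I would estimate the Jacobian determinant pointwise. By Hadamard's inequality (or simply by expanding the determinant as a sum of $n!$ products of entries of $\nabla X$, each bounded by $\|\nabla X(\cdot,t)\|_{L^\infty}$), one obtains
$$\left|\det \nabla X(\alpha,t)\right| \le n!\, \left\|\nabla X(\cdot,t)\right\|_{L^{\infty}}^n \le c(n) \left\|\nabla X(\cdot,t)\right\|_{L^{\infty}}^n.$$
Plugging this into the integral gives
$$m(X(S_0,t)) \le c(n) \left\|\nabla X(\cdot,t)\right\|_{L^{\infty}}^n \, m(S_0),$$
which is the desired inequality once we identify $m(X(S_0,t))$ with $m(\mathrm{supp}(\rho(\cdot,t)))$.

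I expect no serious obstacle here; the whole argument is essentially a one-line change of variables plus a trivial determinant bound. The only point to be slightly careful about is the identification of $\mathrm{supp}(\rho(\cdot,t))$ with $X(\mathrm{supp}(\rho_0),t)$, which follows from the transport identity $\rho(X(\alpha,t),t) = \rho_0(\alpha)$ together with the bijectivity of $X(\cdot,t)$; outside $X(S_0,t)$, any point $x$ has a preimage $\alpha = X^{-1}(x,t) \notin S_0$, so $\rho(x,t) = \rho_0(\alpha) = 0$, and the inclusion $\mathrm{supp}(\rho(\cdot,t)) \subseteq X(S_0,t)$ follows (which is all we need for the measure bound).
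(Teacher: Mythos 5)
Your proof is correct and follows essentially the same route as the paper: identify $\mathrm{supp}(\rho(\cdot,t))$ with the image of $\mathrm{supp}(\rho_0)$ under the flow, apply the change of variables $x = X(\alpha,t)$, and bound $\left|\det \nabla X\right|$ by $c(n)\left\|\nabla X\right\|_{L^\infty}^n$. The paper phrases the change of variables through an indicator function rather than the image set, but the two formulations are equivalent.
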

\begin{proof}
Given $A\subseteq \mathbb{R}^n$, let $\mathbb{1}_{A}$ be the function taking value $1$ in $A$ and $0$ otherwise. Then
$$ m(\text{supp}(\rho(\cdot,t))) = \int_{\mathbb{R}^n} \mathbb{1}_{\text{supp}(\rho(\cdot,t))}(x)\,\diff x.$$
Taking the change of variables $x=X(\alpha,t)$ we get
$$ m(\text{supp}(\rho(\cdot,t))) = \int_{\mathbb{R}^n} \mathbb{1}_{\text{supp}(\rho(\cdot,t))}(X(\alpha,t))\det DX(\alpha,t)\,\diff \alpha.$$
Since $\rho$ is transported with the flow, it is clear that $X(\alpha,t) \in \text{supp}(\rho(\cdot,t))$ if and only if $\alpha\in \text{supp}(\rho_0)$. Thus,
$$ m(\text{supp}(\rho(\cdot,t))) = \int_{\mathbb{R}^n} \mathbb{1}_{\text{supp}(\rho_0)}(\alpha)\det DX(\alpha,t)\,\diff \alpha. $$
Taking absolute value on the previous equation and having into account that $\left|\left|\det DX(\cdot,t)\right|\right|_{L^{\infty}} \le c(n) \left|\left|\nabla X(\cdot,t)\right|\right|_{L^{\infty}}^n$ we get
$$m(\text{supp}(\rho(\cdot,t))) \le c(n) m(\text{supp}(\rho_0))\left|\left|\nabla X(\cdot,t)\right|\right|_{L^{\infty}}^n. $$\end{proof}

We can focus then on proving existence, uniqueness and regularity for $X$. We know $X$ satisfies \eqref{odeflow} and then, as $v(\cdot,t)=k\ast \rho(\cdot,t)$, we obtain
$$\frac{\diff X}{\diff t}(\alpha,t) = v(X(\alpha,t),t) = \int_{\mathbb{R}^n}k(X(\alpha,t)-x')\rho(x',t)\,\diff x' .$$

Applying a change of variables $x'=X(\alpha',t)$
\begin{equation*}
\begin{split}
\frac{\diff X}{\diff t}(\alpha,t) &= \int_{\mathbb{R}^n} k(X(\alpha,t)-X(\alpha',t))\rho(X(\alpha',t))\,\det[DX(\alpha',t)]\,\diff \alpha' = 
\\
&=\int_{\mathbb{R}^n} k(X(\alpha,t)-X(\alpha',t))\rho_0(\alpha')\,\det[DX(\alpha',t)]\,\diff \alpha',
\end{split}
\end{equation*}
where, in the last equality, we have used that $\rho$ is conserved along the flow. 

Consequently, we have an ordinary differential equation (ODE) for $X$. A standard way to prove existence and uniqueness for an ODE is to apply Picard-Lindel\"of's theorem. It can be stated as follows.

\begin{theorem}[Picard-Lindel\"of]
Let $O\subseteq B$ be an open subset of a Banach space $B$ and let $F:O\to B$ be a locally Lipschitz continuous mapping.

Then given $X_0 \in O$, there exists a time $T>0$ such that the ordinary differential equation 
$$\frac{\diff X}{\diff t} = F(X),\phantom{aaaaaa} X(\cdot,t=0) = X_0 \in O, $$
has a unique (local) solution $X\in C^1\left[(-T,T);O\right]$.
\label{picard}
\end{theorem}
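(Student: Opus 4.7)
The plan is to recast the initial value problem as a fixed point problem for an integral operator on a complete metric space, and apply the Banach contraction mapping theorem. This is the classical Picard iteration argument.

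First I would use the hypothesis that $F$ is locally Lipschitz near $X_0$: there exist $r>0$ and constants $M,L<\infty$ such that the closed ball $\overline{B}(X_0,r)\subset O$, $\sup_{X\in \overline{B}(X_0,r)} \norm{B}{F(X)}\le M$, and $\norm{B}{F(X)-F(Y)}\le L\norm{B}{X-Y}$ for all $X,Y\in \overline{B}(X_0,r)$. Then I would fix any $T>0$ satisfying both $TM\le r$ and $TL<1$. Let $\mathcal{X}$ be the set of continuous curves $X:[-T,T]\to B$ with $\sup_{t\in[-T,T]}\norm{B}{X(t)-X_0}\le r$, equipped with the uniform norm; since this is a closed subset of the Banach space $C([-T,T];B)$, it is itself a complete metric space.

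Next I would introduce the Picard operator
\begin{equation*}
\Phi(X)(t) := X_0 + \int_0^t F(X(s))\,\diff s,\quad t\in[-T,T],
\end{equation*}
and verify two things. (i) \emph{Invariance}: for $X\in\mathcal{X}$ one has $\norm{B}{\Phi(X)(t)-X_0}\le |t|\, M\le TM\le r$, so $\Phi(X)\in\mathcal{X}$. (ii) \emph{Contraction}: for $X,Y\in\mathcal{X}$,
\begin{equation*}
\norm{B}{\Phi(X)(t)-\Phi(Y)(t)}\le |t|\, L\sup_{s\in[-T,T]}\norm{B}{X(s)-Y(s)},
\end{equation*}
whence $\Phi$ is a strict contraction on $\mathcal{X}$ with constant $TL<1$. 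The Banach fixed point theorem then produces a unique $X\in\mathcal{X}$ with $\Phi(X)=X$, that is, a unique continuous solution of the integral equation $X(t)=X_0+\int_0^t F(X(s))\,\diff s$.

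Finally I would upgrade regularity and uniqueness. Since $s\mapsto F(X(s))$ is continuous, the fundamental theorem of calculus gives $X\in C^1((-T,T);O)$ with $\frac{\diff X}{\diff t}=F(X)$ and $X(0)=X_0$. For \emph{global} uniqueness in $C^1$, I would take two $C^1$ solutions $X,Y$ with the same data, let $\tau=\sup\{t\ge 0: X(s)=Y(s)\text{ for all } s\in[0,t]\}$, and on any small interval past $\tau$ write $\norm{B}{X(t)-Y(t)}\le L\int_\tau^t \norm{B}{X(s)-Y(s)}\,\diff s$; Grönwall's inequality forces $X\equiv Y$ near $\tau$, contradicting maximality unless $\tau$ exhausts the domain (analogously for negative times). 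The only nontrivial point — and the one requiring the mildest care — is choosing $T$ small enough that both trajectories stay inside the Lipschitz ball $\overline{B}(X_0,r)$; everything else is routine.
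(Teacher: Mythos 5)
Your argument is correct, and it is the standard contraction-mapping (Picard iteration) proof: the paper itself states this theorem without proof, citing the classical literature (Majda--Bertozzi and Ladas--Lakshmikantham), where exactly this fixed-point argument is used. The only point deserving a word of care is in the final uniqueness step, where the Lipschitz constant should be the one valid on a small ball around $X(\tau)$ rather than the constant $L$ chosen near $X_0$, but this is a routine adjustment and does not affect the validity of the proof.
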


So, in order to apply Theorem \ref{picard}, we first need an equation of type \linebreak$\frac{\diff X}{\diff t}=F(X)$. As we have seen, we have it for
\begin{equation}
F(X(\alpha,t)):=\int_{\mathbb{R}^n} k(X(\alpha,t)-X(\alpha',t))\rho_0(\alpha')\,\det[DX(\alpha',t)]\,\diff \alpha'.
\label{func0}
\end{equation}

Then we need a Banach space  $B$ and an open subspace of $B$ such that the flow maps $X(\cdot,t)$ belong to $O_M$. We also need a functional  $F$ mapping $O_M$ to $B$  being this map locally Lipschitz continuous and satisfying that $F(X(\alpha,t))$ is equal to \eqref{func0}.   Let $B = C^{1,\gamma}(\mathbb{R}^n;\mathbb{R}^n)$ and
\begin{equation}
O_M = B\cap \left\{X:\mathbb{R}^n\to \mathbb{R}^n:\, \frac{1}{M}< \sup_{\alpha\neq \beta}\frac{\left|X(\alpha)-X(\beta)\right|}{\left|\alpha-\beta\right|} < M     \right\}. 
\label{O_M}
\end{equation}



Then we have:
\begin{itemize}
\item $O_M$ is non-empty: $Id \in O_M\,\,\forall M>1$.
\item It is an open set since it is the preimage of the open set $(\frac{1}{M},M)$ for some norm function (which is continuous).
\item If $X\in O_M$,  then the image of $X$ is open because $X$ is locally a diffeomorphism and it is also closed because it is complete ($X$ is a bilipschitz function) .   Then the image of $X$ is the whole space and so $X$ is a homeomorphism. 
\end{itemize}

After this, we have to check the hypothesis in Picard-Lindel\"of's theorem. Since computations of derivatives of $F(X)$ will be needed, we first look how distributional derivatives of our kernels are.

\begin{lemma}
Given $k=(k_1,\ldots,k_n):\mathbb{R}^n\setminus\{0\}$, $k\in C^2(\mathbb{R}^n\setminus\{0\})$ homogeneous of degree $1-n$, we have, distributionally, for $i,j\in\{1,\ldots,n\}$
$$\partial_i k_j = \text{p.v. } \partial_i k_j + c_{ij}\delta_0, $$
where 
\begin{equation}
c_{ij}= \int_{\partial B(0,1)} k_j(s) s_i\,\diff \sigma(s). 
\label{defcij}
\end{equation}
\label{distrder}
\end{lemma}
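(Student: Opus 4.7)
The plan is to test the putative identity against an arbitrary $\varphi\in C^\infty_c(\mathbb{R}^n)$ and verify that both sides agree as distributions. Since $k_j$ is continuous on $\mathbb{R}^n\setminus\{0\}$ and homogeneous of degree $1-n>-n$, it is locally integrable, so
$$\langle \partial_i k_j,\varphi\rangle \;=\; -\int_{\mathbb{R}^n} k_j(x)\,\partial_i\varphi(x)\,\diff x \;=\; -\lim_{\varepsilon\to 0}\int_{|x|>\varepsilon} k_j(x)\,\partial_i\varphi(x)\,\diff x.$$

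The key step is to integrate by parts on the region $\{|x|>\varepsilon\}$. The outward unit normal to this region on $\partial B(0,\varepsilon)$ is $-x/|x|$, and the divergence theorem yields
$$-\int_{|x|>\varepsilon} k_j(x)\,\partial_i\varphi(x)\,\diff x \;=\; \int_{|x|>\varepsilon} \partial_i k_j(x)\,\varphi(x)\,\diff x \;+\; \int_{|x|=\varepsilon} k_j(x)\,\varphi(x)\,\frac{x_i}{|x|}\,\diff\sigma(x).$$
The proof then reduces to computing the limits of the two terms on the right as $\varepsilon\to 0$.

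The boundary integral is the easy one. Parametrising $x=\varepsilon s$ with $s\in\partial B(0,1)$, the Jacobian factor $\varepsilon^{n-1}$ from $\diff\sigma(x)$ cancels the factor $\varepsilon^{1-n}$ coming from the homogeneity of $k_j$, and continuity of $\varphi$ gives the limit
$$\varphi(0)\int_{\partial B(0,1)} k_j(s)\,s_i\,\diff\sigma(s) \;=\; c_{ij}\,\varphi(0),$$
which will account for the $c_{ij}\delta_0$ contribution.

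The main obstacle is the volume integral, because $\partial_i k_j$ is homogeneous of degree $-n$ and hence not locally integrable. Here I would invoke the cancellation $\int_{\partial B(0,1)} \partial_i k_j(s)\,\diff\sigma(s) = 0$ recorded in Remark~\ref{remarkJVcanc}. Choosing $R$ large enough that $\mathrm{supp}\,\varphi\subset B(0,R)$, this cancellation together with homogeneity gives $\int_{\varepsilon<|x|<R}\partial_i k_j(x)\,\diff x = 0$, so one may replace $\varphi(x)$ by $\varphi(x)-\varphi(0)$ inside the integral without changing its value. The new integrand is controlled by $|x|^{1-n}$ via the mean value theorem, hence absolutely integrable on $\{|x|<R\}$, and dominated convergence identifies the limit as $\langle \mathrm{p.v.}\,\partial_i k_j,\varphi\rangle$. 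Summing the two contributions yields $\langle \partial_i k_j,\varphi\rangle = \langle \mathrm{p.v.}\,\partial_i k_j + c_{ij}\delta_0,\varphi\rangle$ for every test function $\varphi$, which is the claimed distributional identity.
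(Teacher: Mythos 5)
Your proof is correct and follows essentially the same route as the paper: integrate by parts on $\{|x|>\varepsilon\}$, use the homogeneity of $k_j$ to turn the boundary integral over $\partial B(0,\varepsilon)$ into $c_{ij}\varphi(0)$, and identify the remaining volume term with the principal value. The only cosmetic difference is that the paper runs the same computation for $k_j\ast f$ tested against $\varphi$ rather than for $k_j$ itself, and your explicit verification that the principal value limit exists (via the vanishing spherical mean of $\partial_i k_j$ from Remark~\ref{remarkJVcanc} and dominated convergence) is a detail the paper leaves implicit.
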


\begin{proof}
Let $f$ be such that $k_j\ast f$ makes perfect sense and $\varphi$ a $C^{\infty}$ and compactly supported test function. Then, computing the directional derivative $\partial_i$,

\begin{equation*}
\begin{split}
\langle \partial_i(k_j\ast f), \varphi \rangle &= - \langle k_j\ast f, \partial_i \varphi\rangle =
-\int_{\mathbb{R}^n} \left\{ \int_{\mathbb{R}^n} k_j(x-y)f(y)\,\diff y\right\} \partial_i \varphi(x)\,\diff x=\\
&=-\int_{\mathbb{R}^n} \left\{ \int_{\mathbb{R}^n} k_j(x-y)\partial_i \varphi(x)\,\diff x\right\}f(y) \,\diff y= \\
&=-\int_{\mathbb{R}^n} \left\{ \lim_{\varepsilon \to 0}\int_{\mathbb{R}^n\setminus B(y,\varepsilon)} k_j(x-y)\partial_i \varphi(x)\,\diff x\right\}f(y) \,\diff y=\\
&=-\int_{\mathbb{R}^n} \left\{ \lim_{\varepsilon \to 0}\int_{\mathbb{R}^n\setminus B(y,\varepsilon)} \partial_i[k_j(x-y) \varphi(x)]\,\diff x\right\}f(y) \,\diff y + \\
&+\int_{\mathbb{R}^n} \left\{ \lim_{\varepsilon \to 0}\int_{\mathbb{R}^n\setminus B(y,\varepsilon)}\partial_i k_j(x-y) \varphi(x)\,\diff x\right\}f(y) \,\diff y = \\
&=-\int_{\mathbb{R}^n} \left\{ \lim_{\varepsilon \to 0}\int_{\mathbb{R}^n\setminus B(y,\varepsilon)} \partial_i[k_j(x-y) \varphi(x)]\,\diff x\right\}f(y) \,\diff y +\\
&+ \langle \text{p.v. }\partial_i k_j\ast f, \varphi\rangle= \int_{\mathbb{R}^n} g(y)f(y)\,\diff y+ \langle \text{p.v. }\partial_i k_j\ast f, \varphi\rangle,
\end{split}
\end{equation*}
for $g(y):=-\lim_{\varepsilon \to 0}\int_{\mathbb{R}^n\setminus B(y,\varepsilon)} \partial_i[k_j(x-y) \varphi(x)]\,\diff x$. Applying Stokes' Theorem to the integral defining $g(y)$ we get
\begin{equation*}
\begin{split}
g(y)&=-\lim_{\varepsilon \to 0}\int_{\mathbb{R}^n\setminus B(y,\varepsilon)} \partial_i[k_j(x-y) \varphi(x)]\,\diff x = \\
&=\lim_{\varepsilon \to 0}\int_{\partial B(y,\varepsilon)} k_j(x-y) \varphi(x)n_i(x)\,\diff \sigma(x),
\end{split}
\end{equation*}
where $n_i(x)$ is the $i$-th component of $n(x)$, the exterior normal vector to the surface $\partial B(y,\varepsilon)$ at the point $x$. Subtracting and adding $k_j(x-y)\varphi(y)n_i(x)$ in the integrand we have
\begin{equation}
\begin{split}
g(y)&=\lim_{\varepsilon \to 0}\int_{\partial B(y,\varepsilon)} k_j(x-y) (\varphi(x)-\varphi(y))n_i(x)\,\diff \sigma(x)+\\
&+\varphi(y)\lim_{\varepsilon \to 0}\int_{\partial B(y,\varepsilon)} k_j(x-y) n_i(x)\,\diff \sigma(x),
\end{split}
\label{jc081001}
\end{equation}
The first integral vanishes when taking the limit by the continuity of $\varphi$. Hence, we can write $g(y) = \varphi(y)h(y)$ and we need to compute the value of the integral in $h(y)$. By a change of variable $x=y+\varepsilon s$, $s\in \partial B(0,1)$ we get
\begin{equation*}
\begin{split}
h(y) =\lim_{\varepsilon \to 0} \int_{\partial B(0,1)} k_j(\varepsilon s) n_i(y+\varepsilon s) \varepsilon^{n-1}\,\diff \sigma(s) = \int_{\partial B(0,1)} k_j(s)s_i \,\diff \sigma(s)=:c_{ij},
\end{split}
\end{equation*}
by making use of the homogeneity of the kernel $k_j$ and the fact that $n(x)=x$ for $x\in \partial B(0,1)$. Since $k$ is locally integrable out of the origin, then $c_{ij}$ is a well defined quantity independent of $y$. Then $g(y) = c_{ij}\varphi(y)$ and therefore,
$$\langle \partial_i(k_j\ast f),\varphi\rangle = \langle \text{p.v. }\partial_ik_j \ast f, \varphi \rangle + \langle c_{ij} \delta_0 \ast f,\varphi\rangle, $$
which completes the proof of the lemma.
\end{proof}

\color{black}

We are now in position to show that $F:O_M \to B$.
\begin{proposition}
Let $O_M$ as defined in \eqref{O_M}. Then, the functional $F$ defined by 
\begin{equation}
F(X)(\alpha)=\int_{\mathbb{R}^n} k(X(\alpha)-X(\alpha'))\rho_0(\alpha')\,\det[DX(\alpha')]\,\diff \alpha'
\label{functional}
\end{equation}
 maps $O_M$ to $C^{1,\gamma}(\mathbb{R}^n;\mathbb{R}^n)$.
\label{Fmaps}
\end{proposition}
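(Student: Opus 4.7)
The plan is to rewrite $F(X)$ as a composition and reduce everything to known bounds on Calder\'on--Zygmund operators. Performing the inverse change of variables $x' = X(\alpha')$ in \eqref{functional} yields
\begin{equation*}
F(X)(\alpha) = v(X(\alpha)), \qquad v(x) := (k\ast \rho)(x), \qquad \rho := \rho_0\circ X^{-1},
\end{equation*}
so it suffices to prove $v\in C^{1,\gamma}$ and combine this with the composition estimates from Lemma \ref{propcgamma}.

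First I would check that $\rho\in C^{\gamma}_c$. Since $X\in O_M$ is a bi-Lipschitz homeomorphism with $X\in C^{1,\gamma}$, estimate \eqref{422} gives $\|\rho\|_\gamma\le \|\rho_0\|_\gamma(1+|X|_{1,\gamma}^{\gamma(2n-1)})$. The support of $\rho$ is $X(\mathrm{supp}\,\rho_0)$, whose measure is controlled by $c(n)\,m(\mathrm{supp}\,\rho_0)\|\nabla X\|_{L^{\infty}}^n$ via the argument of Lemma \ref{lemmasupport} (in its static, time-independent version). Setting $R^n$ equal to this measure, estimate \eqref{4546a} applied componentwise to $v = k\ast\rho$ gives $\|v\|_{L^{\infty}}\le cR\|\rho\|_{L^{\infty}}$, and in particular $|F(X)(0)| = |v(X(0))| < \infty$.

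For the first derivatives of $F(X)$, Lemma \ref{distrder} gives the distributional identity
\begin{equation*}
\partial_l v_m = \text{p.v.}(\partial_l k_m)\ast \rho + c_{lm}\rho.
\end{equation*}
The kernel $\partial_l k_m$ is homogeneous of degree $-n$ with zero mean on the sphere by Remark \ref{remarkJVcanc}, so estimates \eqref{4546b}--\eqref{4546c} of Lemma \ref{lemmas4546} together with $|c_{lm}|<\infty$ give $\|\partial_l v_m\|_\gamma < \infty$, controlled quantitatively by $\|\rho\|_\gamma$ and $R$. In particular $\nabla v$ is continuous, so $v\in C^1$ and the classical chain rule yields
\begin{equation*}
\partial_j F_i(X)(\alpha) = \sum_{k=1}^n (\partial_k v_i)(X(\alpha))\,\partial_j X_k(\alpha).
\end{equation*}
The product estimate \eqref{417} and the composition estimate \eqref{421} applied to each $(\partial_k v_i)\circ X$ then bound $\|\partial_j F_i(X)\|_\gamma$ in terms of $\|\nabla v\|_\gamma$ and $|X|_{1,\gamma}$, completing the proof that $F(X)\in C^{1,\gamma}$.

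The main obstacle is precisely this justification of the derivative formula for $\nabla F(X)$: a na\"ive differentiation under the integral sign in \eqref{functional} produces $\nabla k(X(\alpha)-X(\alpha'))$, which is non-integrable near the diagonal $\alpha'=\alpha$. The Eulerian change of variables combined with Lemma \ref{distrder} is what legitimizes the calculation, isolating the singular contribution as a principal-value Calder\'on--Zygmund operator (which is tame on $C^\gamma$ by Lemma \ref{lemmas4546}) plus a harmless pointwise multiple of $\rho$. Everything else reduces to bookkeeping with the H\"older algebra.
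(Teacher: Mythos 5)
Your proposal is correct and follows essentially the same route as the paper's proof: the Eulerian change of variables $x'=X(\alpha')$ to write $F(X)=(k\ast(\rho_0\circ X^{-1}))\circ X$, the $L^\infty$ bound via \eqref{4546a} with support control from Lemma \ref{lemmasupport}, the chain rule combined with Lemma \ref{distrder} to express $\partial_j F_i(X)$ as a product of $\partial_j X_k$ with a principal-value CZO term plus a $c_{lm}\rho$ term, and finally the algebra and composition estimates from Lemmas \ref{propcgamma} and \ref{lemmas4546}. Your version is slightly more explicit in framing $v=k\ast\rho$ as an intermediate object and in flagging why na\"ive differentiation under the integral sign is illegitimate, but the decomposition, the key lemmas invoked, and the quantitative bounds all match the paper's argument.
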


\begin{proof}

Let $X\in O_M$. In order to prove the proposition, we need to verify 
\begin{equation}
\left|\left| F(X)\right|\right|_{L^{\infty}} + \sup_{i\in \{1,\ldots,n\}} \left(\left|\left|\frac{\diff}{\diff \alpha_i} F(X)\right|\right|_{\gamma} \right)< \infty . 
\label{exprsup}
\end{equation}

If we consider the change of variables $x'=X(\alpha')$ in \eqref{functional} we get, for the $j$-th component \begin{equation}
\begin{split}
F_j(X)(\alpha) &= \int_{\mathbb{R}^n} k_j(X(\alpha)-x')\rho_0(X^{-1}(x'))\,\diff x'=\\
&=(k_j\ast (\rho_0\circ X^{-1}))(X(\alpha)). 
\end{split}
\label{aftercov0}
\end{equation}

Let $R=\textrm{m}(\textrm{supp }(\rho_0\circ X^{-1}))^{1/n}$. Then, as in Lemma \ref{lemmasupport}, we have, for $R_0=\textrm{m}(\textrm{supp }(\rho_0))^{1/n}$ that
$$ R \le c_n R_0 \left|\left|\nabla X\right|\right|_{L^{\infty}} . $$ Since the kernel $k$ satisfies \eqref{charker1} then by \eqref{4546a} in Lemma \ref{lemmas4546} we have
$$  \left|\left| F(X)\right|\right|_{L^{\infty}} \le c R \left|\left|\rho_0\circ X^{-1}\right|\right|_{L^{\infty}} = c R \left|\left|\rho_0 \right|\right|_{L^{\infty}}\le c_n R_0 \left|\left|\nabla X \right|\right|_{L^{\infty}}\left|\left|\rho_0 \right|\right|_{L^{\infty}},$$ 
which is bounded for $X\in C^{1,\gamma}$ and $\rho_0 \in C^{\gamma}_c$.

We focus then on the norms of derivatives of $F(X)$. We write  $\partial_i = \frac{\partial}{\partial \alpha_i}$. We have, by definition of the norm, $\left|\left|\partial_i F(X)\right|\right|_{\gamma} = \sup_{j\in\{1,\ldots,n\}} \left|\left|\partial_i F_j(X)\right|\right|_{\gamma}$. We  work then with $\partial_i F_j(X)$ for $i,j\in\{1,\ldots,n\}$. 
An application of the chain rule, combined with Lemma \ref{distrder}, yields  
\begin{equation}
\begin{split}
\partial_i&F_j(X)(\alpha) =\nabla(k_j\ast (\rho_0\circ X^{-1}))(X(\alpha))\cdot \partial_i X(\alpha)  = \\
&=\sum_{r=1}^n \partial_r(k_j\ast(\rho_0\circ X^{-1}))(X(\alpha))\partial_i X_r(\alpha) =\\
&=\sum_{r=1}^n \left(c_{rj} \rho_0(\alpha) + \text{p.v.} (\partial_rk_j\ast(\rho_0\circ X^{-1}))(X(\alpha))\right)\partial_i X_r(\alpha)=\\
&=\sum_{r=1}^n \left(c_{rj}\rho_0(\alpha)+S_{rj}(\alpha)\right)\partial_iX_r(\alpha),
\end{split}
\label{jc240201}
\end{equation}
\color{black}
where $S_{rj}(\alpha):=\text{p.v.}(\partial_rk_j\ast(\rho_0\circ X^{-1}))(X(\alpha))$ and $\cdot$ stands for the usual scalar product.

Since $\rho_0, \partial_i X_r \in C^{\gamma}$ and $C^{\gamma}$ is an algebra, then it suffices to control the $C^{\gamma}$ norm of  $S_{rj}$. Clearly, hypothesis in Lemma \ref{lemmas4546} are satisfied if $P=\partial_r k_j$. Then we set  $\varepsilon = \left|\rho_0\circ X^{-1}\right|_{\gamma}^{1/\gamma}$and apply bound \eqref{4546b}  in  Lemma \ref{lemmas4546} to get
\begin{equation}
\left|\left|S_{rj}\right|\right|_{L^{\infty}} \le c\left\{1+ \max(1,\frac{1}{\gamma} \ln(R \left|\rho_0 \circ X^{-1}\right|_{\gamma}))\left|\left|\rho_0\circ X^{-1} \right|\right|_{L^{\infty}} \right\} 
\label{sijinf}
\end{equation}
where $R=\textrm{m}(\textrm{supp }(\rho_0\circ X^{-1}))^{1/n}$. As previously, $R$ is bounded by $c_n R_0 \left|\left|X\right|\right|_{L^{\infty}}$. Since both $\left|\left|\rho_0\circ X^{-1} \right|\right|_{L^{\infty}}$ and $\left|\rho_0\circ X^{-1}\right|_{\gamma}$ are bounded above by $ \left|\left|\rho_0\circ X^{-1} \right|\right|_{\gamma}$ and taking also into account that
$$\left|\left|\rho_0 \circ X^{-1}\right|\right|_{\gamma} \le \left|\left|\rho_0\right|\right|_{\gamma}(1+ \left|X\right|_{1,\gamma}^{\gamma(2n-1)}) < \infty,$$
then we have that the right hand side of \eqref{sijinf} is finite. On the other hand, by \eqref{4546c} we have
$$\left|S_{rj}\right|_{\gamma}\le c \left|\rho_0\circ X^{-1}\right|_{\gamma}\left|\left|\nabla X\right|\right|_{L^{\infty}}^{\gamma} \le c \left|\left|\rho_0\right|\right|_{\gamma}(1+ \left|X\right|_{1,\gamma}^{\gamma(2n-1)})\left|X\right|_{1,\gamma}^{\gamma}. $$

Then, as we argued before, the $C^{\gamma}$ norm of $\partial_i F_j(X)$ is finite for any $i,j$ so the supremum in \eqref{exprsup} is finite as well, completing the proof of the proposition.
\end{proof}

Then we have that $F$ satisfies the first hypothesis in Picard-Lindel\"of's theorem. It remains to check that $F$ is locally Lipschitz. We claim (and prove later) that if the directional derivative $F'(X)$ is bounded as a linear operator between $O_M$ and $B$ then $F$ is locally Lipschitz. So, first of all we have to compute this directional derivative. An auxiliary lemma is useful for this computation and we need to give a previous definition to write it.
\begin{definition}
Given $A\in M_{n\times n}(\mathbb{R}^n)$ we define $A_{i,j}^c \in M_{(n-1)\times(n-1)}(\mathbb{R}^n)$ as the submatrix of A obtained by erasing the $i$-th row and the $j$-th column .
\label{defconjmat}
\end{definition}
The following lemma is not needed whenever the velocity field is divergence free, as in the Euler equation. In that case, $\det(DX(\cdot,t))\equiv 1$ and consequently the functional in \eqref{functional} does not contain the determinant inside the integral. In the general case, as we will see later, an expression for the sum of determinants will be necessary.
\begin{lemma}
Given $X, Y: \mathbb{R}^n \to \mathbb{R}^n$ differentiable homeomorphisms. 
$$\frac{\diff}{\diff \varepsilon} \det (DX+\varepsilon DY)\big|_{\varepsilon=0} =\sum_{i,j=1}^n (-1)^{i+j} \partial_j Y_i \det (DX_{i,j}^c) .$$
\label{lemmadet}
\end{lemma}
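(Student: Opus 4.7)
The plan is to recognize this identity as a direct consequence of Jacobi's formula for the derivative of the determinant, specialized to a linear perturbation. First I would view $\det$ as a smooth function of the matrix entries and compute $\partial \det / \partial a_{ij}$ from the Laplace cofactor expansion, and then assemble the result via the chain rule. There is no substantive analytic content here: $X$ and $Y$ being smooth homeomorphisms is irrelevant except that $DX(\alpha)$ and $DY(\alpha)$ are well-defined square matrices at a given point, and we are only differentiating an algebraic function of their entries at $\varepsilon = 0$.

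More concretely, for any fixed column index $j$, the cofactor expansion along that column reads
$$\det A = \sum_{i=1}^n (-1)^{i+j} a_{ij}\, \det A_{i,j}^c.$$
Since $A_{i,j}^c$ is obtained by deleting the $i$-th row and $j$-th column of $A$, its entries do not involve $a_{ij}$, so differentiating in $a_{ij}$ kills every term of the sum except the one of index $i$, giving
$$\frac{\partial \det A}{\partial a_{ij}} = (-1)^{i+j} \det A_{i,j}^c.$$
Setting $A(\varepsilon) = DX + \varepsilon\, DY$, the entries are $a_{ij}(\varepsilon) = \partial_j X_i + \varepsilon \partial_j Y_i$, so $\tfrac{d}{d\varepsilon} a_{ij}(\varepsilon)\big|_{\varepsilon=0} = \partial_j Y_i$. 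Applying the multivariable chain rule at $\varepsilon=0$ then yields
$$\frac{d}{d\varepsilon}\det(DX+\varepsilon DY)\Big|_{\varepsilon=0} = \sum_{i,j=1}^n \frac{\partial \det}{\partial a_{ij}}(DX)\cdot \partial_j Y_i = \sum_{i,j=1}^n (-1)^{i+j}\partial_j Y_i\, \det(DX_{i,j}^c),$$
which is exactly the claimed formula.

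The only place where one has to be careful is the index convention: here $(DX)_{ij} = \partial_j X_i$ (so that $i$ indexes the component of $X$ and $j$ the coordinate of differentiation), which is what ensures that differentiating with respect to $a_{ij}$ produces the factor $\partial_j Y_i$ on the right-hand side rather than $\partial_i Y_j$. Apart from this bookkeeping, the argument is just the standard Laplace-expansion derivation of Jacobi's formula, so I do not expect any real obstacle.
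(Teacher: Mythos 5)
Your proof is correct, but it takes a different route from the paper. You differentiate $\det$ as a polynomial function of the matrix entries: the cofactor (Laplace) expansion along column $j$ gives $\partial \det A/\partial a_{ij} = (-1)^{i+j}\det A_{i,j}^c$, since no cofactor $A_{k,j}^c$ appearing in that expansion contains any entry of column $j$, and the chain rule applied to $a_{ij}(\varepsilon)=\partial_j X_i+\varepsilon\,\partial_j Y_i$ then yields the claim; this is just Jacobi's formula specialized to a linear perturbation, and your remark on the index convention $(DX)_{ij}=\partial_j X_i$ is exactly the bookkeeping point that needs checking. The paper instead invokes a closed formula (from Marcus) expanding $\det(A+B)$ as a sum over pairs of complementary submatrices, substitutes $A=DX$, $B=\varepsilon DY$ to obtain an explicit polynomial in $\varepsilon$, and observes that after differentiating and setting $\varepsilon=0$ only the terms with $(n-1)\times(n-1)$ submatrices of $DX$ survive, which reduce to the same cofactor expression. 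Your argument is shorter and more elementary, needing only the standard cofactor derivative; the paper's route buys the full expansion of $\det(DX+\varepsilon DY)$ in powers of $\varepsilon$, of which only the linear coefficient is used here. Both are complete proofs of the stated identity, and you are also right that the homeomorphism hypothesis plays no role beyond ensuring the matrices are defined pointwise.
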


\begin{proof}
First we use a formula for the determinant of a sum of square matrices. The proof can be found in \cite[pp. 162-163]{marcus}. Let $A, B \in M_{n\times n}(\mathbb{R})$ and let $\alpha, \beta$ strictly increasing integer sequences chosen from $\{1,\ldots,n\}$. Let $\left|\alpha\right|$ (resp. $\left|\beta\right|$) the number of elements of $\alpha$ (resp. $\beta$). If $\left|\alpha\right|=\left|\beta\right|$ then let $A[\alpha|\beta]\in M_{\left|\alpha\right|\times \left|\alpha\right|}(\mathbb{R})$ the submatrix of $A$ lying in rows $\alpha$ and columns $\beta$ and $B[\alpha|\beta]\in M_{(n-\left|\alpha\right|)\times (n-\left|\alpha\right|)}(\mathbb{R})$ the submatrix of $B$ lying in rows complementary to $\alpha$ and columns complementary to $\beta$. Let $s(\alpha)$ (resp. $s(\beta)$) the sum of the integers in $\alpha$ (resp. $\beta$). Then
\begin{equation}
\det(A+B) = \sum_{r=0}^n \sum_{\stackrel{\alpha,\beta}{\left|\alpha\right|=\left|\beta\right|=r}} (-1)^{s(\alpha)+s(\beta)} \det(A[\alpha|\beta])\det(B[\alpha|\beta]).
\label{sumdet}
\end{equation}

Note that for a matrix $M\in M_{s\times s}(\mathbb{R})$ and a constant $c\in \mathbb{R}$ we have $det(c\cdot M) = c^s \det(M)$. Then setting $A=DX$ and $B=\varepsilon DY$ in \eqref{sumdet} we get
$$\det(DX+\varepsilon DY) = \sum_{r=0}^n \varepsilon^{n-r} \sum_{\left|\alpha\right|=\left|\beta\right|=r} (-1)^{s(\alpha)+s(\beta)} \det(DX[\alpha|\beta])\det(DY[\alpha|\beta]). $$

Differentiating with respect to $\varepsilon$ the previous equation and setting $\varepsilon = 0$ make some terms vanish and, in consequence,
$$\frac{\diff}{\diff \varepsilon} \det(DX+\varepsilon DY)\big|_{\varepsilon = 0} =\sum_{\left|\alpha\right|=\left|\beta\right|=n-1} (-1)^{s(\alpha)+s(\beta)}\det(DX[\alpha|\beta])\det(DY[\alpha|\beta]).$$

Note  that a strictly increasing sequence taking $n-1$ elements of $\{1,\ldots,n\}$ is a sequence avoiding just one of them. So 
$$\alpha=(1,\ldots,i-1,i+1,\ldots,n),\,\,\,\,\beta=(1,\ldots,j-1,j+1,\ldots,n),$$
 for $i,j = 1,\ldots,n$. Then $s(\alpha)+s(\beta) = n(n+1)-(i+j)$ and hence $(-1)^{s(\alpha)+s(\beta)}=(-1)^{i+j}$. For these special sequences we can simplify and write $DY[\alpha|\beta]=\partial_j Y_i$ and $DX[\alpha|\beta]=DX_{i,j}^c$ as in Definition \ref{defconjmat}.
 
 Summing up,
 $$ \frac{\diff}{\diff \varepsilon} \det (DX+\varepsilon DY)\big|_{\varepsilon=0} = \sum_{i,j=1}^n (-1)^{i+j} \partial_j Y_i \det (DX_{i,j}^c) $$
 and the lemma is proved.

\end{proof}

We already  have the tools to compute the directional derivative of $F$.

\begin{proposition}
Let $X\in O_M, Y\in B$ . For $F=(F_j)_{j=1}^n$ defined in \eqref{functional} we have $F_j'(X)Y=$I+II where
\begin{equation*}
\begin{split}
\text{I}:&= \int_{\mathbb{R}^n} \nabla k_j(X(\alpha)-X(\alpha'))\cdot (Y(\alpha)-Y(\alpha'))\rho_0(\alpha')\det(DX)(\alpha')\,\diff \alpha',\\
\text{II}:&=\sum_{r,s=1}^n (-1)^{r+s}\int_{\mathbb{R}^n} k_j(X(\alpha)-X(\alpha'))\rho_0(\alpha') \partial_sY_r(\alpha')\det(DX_{r,s}^c)(\alpha')\,\diff \alpha'.
\end{split}
\end{equation*}
\label{dirder}
\end{proposition}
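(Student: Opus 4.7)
The plan is to evaluate the directional derivative directly from its definition,
$$F_j'(X)Y(\alpha) \;=\; \frac{\diff}{\diff\varepsilon}\,F_j(X+\varepsilon Y)(\alpha)\bigg|_{\varepsilon=0},$$
by differentiating under the integral sign in
$$F_j(X+\varepsilon Y)(\alpha)=\int_{\mathbb{R}^n}k_j\bigl((X+\varepsilon Y)(\alpha)-(X+\varepsilon Y)(\alpha')\bigr)\rho_0(\alpha')\det D(X+\varepsilon Y)(\alpha')\,\diff\alpha'$$
and applying the product rule. Only the $k_j$-factor and the determinant depend on $\varepsilon$, so exactly two contributions arise, which will become I and II.

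First I would handle the $k_j$-factor by the chain rule: at $\varepsilon=0$ it produces $\nabla k_j(X(\alpha)-X(\alpha'))\cdot(Y(\alpha)-Y(\alpha'))$, which multiplied by the unchanged factor $\rho_0(\alpha')\det DX(\alpha')$ and integrated gives precisely I. Next, for the determinant contribution I would invoke Lemma \ref{lemmadet} with $A=DX(\alpha')$ and the role of $Y$ played by $DY(\alpha')$ to obtain
$$\frac{\diff}{\diff\varepsilon}\det D(X+\varepsilon Y)(\alpha')\bigg|_{\varepsilon=0}=\sum_{r,s=1}^n(-1)^{r+s}\partial_sY_r(\alpha')\det\bigl(DX_{r,s}^c\bigr)(\alpha');$$
multiplying by $k_j(X(\alpha)-X(\alpha'))\rho_0(\alpha')$ and integrating yields II.

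The main obstacle is justifying the interchange of derivative and integral in the term leading to I. Since $\nabla k_j$ is homogeneous of degree $-n$, the kernel $\nabla k_j(X(\alpha)-X(\alpha'))$ is by itself not locally integrable near $\alpha'=\alpha$. The rescue comes from the difference $Y(\alpha)-Y(\alpha')$: because $X\in O_M$ is bilipschitz and $Y\in B=C^{1,\gamma}(\mathbb{R}^n;\mathbb{R}^n)$ is globally Lipschitz, one has the pointwise bound
$$\bigl|\nabla k_j(X(\alpha)-X(\alpha'))\cdot(Y(\alpha)-Y(\alpha'))\bigr|\le c\,|\alpha-\alpha'|^{1-n},$$
which is locally integrable on $\mathbb{R}^n$. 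Compact support of $\rho_0$ confines $\alpha'$ to a bounded set, and for $|\varepsilon|$ sufficiently small $X+\varepsilon Y$ still lies in $O_{2M}$ so the bilipschitz constants, and hence the majorant, are uniform in $\varepsilon$. Dominated convergence then legitimates the differentiation under the integral. The term leading to II is easier, since $k_j$ is only of order $1-n$ and the remaining factors are bounded; together, I and II give the stated expression.
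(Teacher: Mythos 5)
Your proposal is correct and follows essentially the same route as the paper: apply the chain rule to the $k_j$-factor and invoke Lemma \ref{lemmadet} for the determinant, then combine. The one genuine addition is your explicit dominated-convergence justification for differentiating under the integral, exploiting the bound $|\nabla k_j(X(\alpha)-X(\alpha'))\cdot(Y(\alpha)-Y(\alpha'))|\le c|\alpha-\alpha'|^{1-n}$ from bilipschitzness of $X$ and Lipschitzness of $Y$; the paper omits this step but gestures at the same cancellation in Remark \ref{nothyper}, so your argument is a welcome tightening rather than a different method.
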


\begin{proof}
Let $j=1,\ldots,n$. Consider $X\in O_M$ and  $Y \in B$. Firstly, we apply the chain rule to see
\begin{equation*}
\begin{split}
\frac{\diff}{\diff \varepsilon}&\left(k_j(X(\alpha)-X(\alpha')+\varepsilon(Y(\alpha)-Y(\alpha')))\right)_{\varepsilon=0} = \\
&=\sum_{i=1}^n \partial_i k_j(X(\alpha)-X(\alpha')) (Y_i(\alpha)-Y_i(\alpha'))= \\
& = \nabla k_j(X(\alpha)-X(\alpha'))\cdot (Y(\alpha)-Y(\alpha')),
\end{split}
\end{equation*}
where $\cdot$ is the usual scalar product. Thus having into account the above computation and applying Lemma \ref{lemmadet} we get
\begin{equation*}
\begin{split}
&(F_j'(X)Y)(\alpha)=\frac{\diff}{\diff \varepsilon} (F_j(X+\varepsilon Y)(\alpha))_{\varepsilon=0} = \\
&= \int_{\mathbb{R}^n} \nabla k_j(X(\alpha)-X(\alpha'))\cdot (Y(\alpha)-Y(\alpha'))\rho_0(\alpha')\det(DX)(\alpha')\,\diff \alpha'+\\
&+\sum_{r,s=1}^n (-1)^{r+s}\int_{\mathbb{R}^n} k_j(X(\alpha)-X(\alpha'))\rho_0(\alpha') \partial_sY_r(\alpha')\det(DX_{r,s}^c)(\alpha')\,\diff \alpha',
\end{split}
\end{equation*}
as we wanted to prove.
\end{proof}

\begin{remark}
Note that there is no need to write principal value in the term I of the proposition because  the singularity of $\nabla k_j$ when $\alpha = \alpha'$ is compensated with the term $Y(\alpha)-Y(\alpha')$.
\label{nothyper}
\end{remark}

The directional derivative computed in Proposition \ref{dirder} has been decomposed as the sum of two terms. The second one is very similar to the one treated in Proposition \ref{Fmaps}, but the first one looks different. It can be written as an integral with respect to a kernel which is not of convolution type and so its derivatives may be tricky to handle. In the following lemma, which is somehow technical, we compute exactly those derivatives.

\begin{lemma}
Let $I$ defined in Proposition \ref{dirder}. Then $I= \sum_{i=1}^n I_i(\alpha)$ where
$$I_i(\alpha) :=\int_{\mathbb{R}^n} \partial_i k_j(X(\alpha)-X(\alpha'))(Y_i(\alpha)-Y_i(\alpha'))\rho_0(\alpha')\det(DX)(\alpha')\,\diff \alpha',$$
and its distributional derivatives are $\partial_l I_i(\alpha) = \nabla \tilde{I}_i(X(\alpha))\cdot \partial_l X(\alpha)$
where 
\begin{equation*}
\begin{split}
&\partial_k \tilde{I}_i(x) =\\
&= \text{p.v.}\int_{\mathbb{R}^n} \partial_k\partial_ik_j(x-x')(Y_i(X^{-1}(x))-Y_i(X^{-1}(x')))(\rho_0\circ X^{-1})(x')\,\diff x' + \\
&+ \partial_k[Y_i \circ X^{-1}](x)\text{p.v.}\int_{\mathbb{R}^n} \partial_ik_j(x-x')(\rho_0\circ X^{-1})(x')\,\diff x'  + \\
&+c_k (\nabla[Y_i \circ X^{-1}](x)\cdot \xi_k )(\rho_0\circ X^{-1})(x),
\end{split}
\end{equation*} 
where $\xi_k$ is a vector in $\mathbb{R}^n$ depending on $k$.
\label{derofI}
\end{lemma}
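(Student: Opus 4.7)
The plan is to split the proof into two stages: a reduction by change of variables, then a careful computation of the gradient of an explicit integral operator via cutoff regularisation.

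First, I expand the dot product in $I$ to write $I = \sum_i I_i$, with $I_i$ as in the statement. Applying the change of variables $x' = X(\alpha')$ — admissible because $X \in O_M$ is a $C^{1,\gamma}$-diffeomorphism — the Jacobian $\det(DX)(\alpha')$ cancels, and, after writing $Y_i(\alpha) = (Y_i\circ X^{-1})(X(\alpha))$, one obtains $I_i(\alpha) = \tilde{I}_i(X(\alpha))$ with
$$\tilde{I}_i(x) = \int_{\mathbb{R}^n}\partial_i k_j(x-x')\bigl[(Y_i\circ X^{-1})(x) - (Y_i\circ X^{-1})(x')\bigr](\rho_0\circ X^{-1})(x')\,\diff x'.$$
The chain rule then immediately gives $\partial_l I_i(\alpha) = \nabla\tilde{I}_i(X(\alpha))\cdot\partial_l X(\alpha)$, as required.

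For the core computation of $\partial_k\tilde{I}_i$, write $\phi := Y_i\circ X^{-1}$ and $g := \rho_0\circ X^{-1}$, both in $C^\gamma$ by \eqref{422}. The integral defining $\tilde{I}_i$ converges absolutely, since $\phi(x)-\phi(x')$ absorbs the degree-$(-n)$ singularity of $\partial_i k_j$. I would introduce a radial cutoff $\chi_\varepsilon$ that vanishes on $B(0,\varepsilon)$ and equals $1$ outside $B(0,2\varepsilon)$, differentiate the regularised version $\tilde{I}_i^\varepsilon$ under the integral sign, and send $\varepsilon\to 0$. Three contributions to $\partial_k\tilde{I}_i^\varepsilon$ arise: one from $\partial_k$ acting on $\chi_\varepsilon(x-x')$, one from $\partial_k$ acting on $\partial_i k_j(x-x')$, and one from $\partial_k$ acting on the factor $\phi(x)$. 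In the limit these should yield, respectively, the delta-type correction $c_k(\nabla\phi(x)\cdot\xi_k)g(x)$, the principal-value integral of $\partial_k\partial_i k_j$ against the Hölder difference $\phi(x)-\phi(x')$, and the factor $\partial_k\phi(x)$ multiplying the p.v.\ integral of $\partial_i k_j$ against $g$.

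The main obstacle is the rigorous passage to the limit. For the boundary-type piece, $\partial_k\chi_\varepsilon$ is a surface bump of scale $\varepsilon$ concentrated near $\partial B(x,\varepsilon)$; rescaling $x' = x - \varepsilon s$ with $s\in\partial B(0,1)$, using the Taylor expansion $\phi(x)-\phi(x-\varepsilon s) = \varepsilon\,\nabla\phi(x)\cdot s + o(\varepsilon)$ together with the homogeneity $\partial_i k_j(\varepsilon s) = \varepsilon^{-n}\partial_i k_j(s)$, produces in the limit a term of the form $g(x)\,\nabla\phi(x)\cdot\xi_k$, where $\xi_k \in \mathbb{R}^n$ arises from a vector-valued spherical integral of $\partial_i k_j(s)$ weighted by a component of $s$, and $c_k$ is a related constant. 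This is the exact analogue of the delta-type contribution in Lemma \ref{distrder}. Simultaneously, one must interpret the bulk integral against $\partial_k\partial_i k_j$ as a genuine principal value: its integrand has borderline behaviour $O(|x-x'|^{-n-1+\gamma})$, but the mean-zero property of $\partial_k\partial_i k_j$ on spheres (by homogeneity, as in Remark \ref{remarkJVcanc}) combined with a first-order Taylor cancellation in $\phi$ restores convergence. Assembling the three limit contributions yields the stated formula for $\partial_k\tilde{I}_i(x)$.
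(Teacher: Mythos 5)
Your argument is correct and follows essentially the same route as the paper: the same change of variables and chain-rule reduction to $\tilde{I}_i$, and the same three-term identification of $\partial_k\tilde{I}_i$, with the delta-type correction extracted from the near-diagonal contribution via the homogeneity of $\partial_i k_j$ and a first-order Taylor expansion of $Y_i\circ X^{-1}$. The only cosmetic difference is that you differentiate a smooth radial-cutoff regularisation and pass to the limit, whereas the paper computes the distributional derivative by pairing with a test function and applying Stokes' theorem on $\mathbb{R}^n\setminus B(0,\varepsilon)$; both devices yield the same spherical-integral constant $\xi_k$.
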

\begin{proof}

Consider $\alpha=X^{-1}(x)$, then after the change of variables $\alpha'=X^{-1}(x')$, we have
\begin{equation*}
\begin{split}
\tilde{I}_i(x) &= I_i(X^{-1}(x)) = \\
&=\int_{\mathbb{R}^n} \partial_i k_j(x-x')(Y_i(X^{-1}(x))-Y_i(X^{-1}(x')))\rho_0(X^{-1}(x'))\,\diff x' .
\end{split}
\end{equation*}

Following the scheme in \cite[p. 165]{MB} for the Euler equation, let \linebreak$R(x,x')=\partial_ik_j(x-x')(Y_i(X^{-1}(x))-Y_i(X^{-1}(x')))$. Firstly, we  compute the partial distributional derivative with respect to $x_k$ of $R(x+x',x')$. In order to do that, note previously that, given $h>0$ and $a\in \mathbb{R}^n,$ $\left|a\right|=1$, we have, by Taylor expansion of $Y\circ X^{-1}$,
\begin{equation}
\begin{split}
\lim_{h\to 0} R(x+&ah,x)h^{n-1} = \\
&=\lim_{h\to 0} \partial_i k_j(ah)(Y_i(X^{-1}(x+ah))-Y_i(X^{-1}(x)))h^{n-1}= \\
&=\lim_{h\to 0} \partial_ik_j(a)h^{-n}h^{n-1}(\nabla [Y_i \circ X^{-1}](x)\cdot ah + o(h)) = \\
&=\partial_i k_j (a) \nabla [Y_i \circ X^{-1}](x)\cdot a. 
\label{obser0302}
\end{split}
\end{equation}

We then compute the distributional derivative of $R(x+x',x')$. Let \linebreak$\varphi\in C^{\infty}_c(\mathbb{R}^n;\mathbb{R})$ a test function. Then
\begin{equation*}
\begin{split}
\langle& \partial_{x_k} R(\cdot+x',x'),\varphi\rangle  = 
 -\int_{\mathbb{R}^n} \partial_{x_k}\varphi(x) R(x+x',x')\,\diff x = \\
 =&\lim_{\varepsilon \to 0} \int_{\mathbb{R}^n\setminus B(0,\varepsilon)} \varphi(x)\partial_{x_k}R(x+x',x')\,\diff x - \\
 -&\lim_{\varepsilon\to 0}\int_{\mathbb{R}^n\setminus B(0,\varepsilon)} \partial_{x_k}\left[\varphi(x)R(x+x',x')\right]\,\diff x. \\
\end{split}
\end{equation*}
Applying Stokes's theorem, and since $\varphi$ has compact support we obtain
\begin{equation*}
\begin{split}
\langle& \partial_{x_k} R(\cdot+x',x'),\varphi\rangle  =p.v.\langle\partial_{x_k}R(\cdot+x',x'),\varphi\rangle +\\
&+\lim_{\varepsilon \to 0} \int_{\partial B(0,\varepsilon)} \varphi(x)R(x+x',x')n_k(x)\,\diff \sigma(x),
\end{split}
\end{equation*}
where $n_k(x)$ is the $k$-th component of the unitary normal vector to $\partial B(0,\varepsilon)$ at the point $x$.
We  apply the observation made in \eqref{obser0302} to conclude that
$$\lim_{\varepsilon \to 0} \int_{\partial B(0,\varepsilon)} \varphi(x)R(x+x',x')n_k(x)\,\diff \sigma(x) = \varphi(0)\nabla[Y_i \circ X^{-1}](x')\cdot \xi_k,$$
where the $l$-th component of $\xi_k$ is
$$(\xi_k)_l = \int_{\partial B(0,1)} \partial_i k_j(a)n_k(a)\,a_l\,\diff \sigma(a). $$
Therefore, distributionally
$$\partial_{x_k} R(\cdot+x',x') = \text{p.v.}\partial_{x_k} R(\cdot+x',x') +(c_k \nabla[Y_i\circ X^{-1}](x')\cdot \xi_k) \delta_0.$$
Then, since $\partial_{x_k}R(x,x') = \partial_{x_k}[R(\cdot+x',x')](x-x')$ we finally get for \linebreak$H(x) = \int_{\mathbb{R}^n} R(x,x')f(x')\,\diff x'$,
\begin{equation*}
\begin{split}
\partial_k H(x) &= \text{p.v.}\int_{\mathbb{R}^n} \partial_k R(x,x') f(x')\,\diff x' + \\
&+\int_{\mathbb{R}^n} \delta_0(x-x') c_k\nabla[Y_i \circ X^{-1}](x')\cdot \xi_k f(x')\,\diff x' =\\
&= \text{p.v.}\int_{\mathbb{R}^n} \partial_k\partial_ik_j(x-x')(Y_i(X^{-1}(x))-Y_i(X^{-1}(x')))f(x')\,\diff x' + \\
&+ \partial_k[Y_i \circ X^{-1}](x)\text{p.v.}\int_{\mathbb{R}^n} \partial_ik_j(x-x')f(x')\,\diff x'  + \\
&+c_k \nabla[Y_i \circ X^{-1}](x)\cdot \xi_k f(x).
\end{split}
\end{equation*}

The proof is completed setting  $f = \rho_0\circ X^{-1}$ in the previous expression and applying the chain rule to $I_i(\alpha) = \tilde{I}_i(X(\alpha))$.

\end{proof}

Remember that our goal is to bound the $C^{1,\gamma}$ norm of  the map  \linebreak$\alpha \to F'(X)Y(\alpha)$ in such a way that we get
$$\left|F'(X)Y\right|_{1,\gamma} \le c \left|Y\right|_{1,\gamma} $$
for $c$ depending maybe on $n, \rho_0$ and $X$ in order to have that $F'(X)$ is bounded as a linear operator.
 Note that the first term in $\partial_k\tilde{I}_i$ in Lemma \ref{derofI}  is an integral containing $\partial_k\partial_i k_j$, which is a hypersingular kernel. Nevertheless, the term  $Y(\alpha)-Y(\alpha')$ will, in some sense, kill this excess of singularity. We quantify this effect in the following Lemma.
\begin{lemma}
Let $H:\mathbb{R}^n \to \mathbb{R}$, $H\in C^{1}(\mathbb{R}^n\setminus\{0\})$,  be a kernel  homogeneous of degree $-n-1$ 
 such that  $H_1^i(x) = x_iH(x)$, $i=1,\ldots,n$, define a CZO of convolution type. Let $g\in C^{1,\gamma}(\mathbb{R}^n;\mathbb{R})$ and $f\in C^{\gamma}_c(\mathbb{R}^n;\mathbb{R})$. Then for
$$Tf(x)=\text{p.v. }\int_{\mathbb{R}^n} H(x-x')(g(x)-g(x'))f(x')\,\diff x' $$
we have
$$\left|\left|Tf\right|\right|_{\gamma}\le c \left|g\right|_{1,\gamma} \left|\left|f\right|\right|_{\gamma}, $$
for $c$ depending on $\text{m(supp(}f))$.
\label{lemmahyper}
\end{lemma}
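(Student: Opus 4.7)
The strategy is to exploit the first-order cancellation contained in $g(x) - g(x')$ to tame the hypersingular kernel $H$, reducing $T$ to a CZ-type operator plus a remainder with integrable kernel. Concretely, I would Taylor-expand around $x$:
\[
g(x) - g(x') = \nabla g(x) \cdot (x - x') + r(x, x'), \qquad |r(x,x')| \le c|\nabla g|_\gamma |x-x'|^{1+\gamma},
\]
and correspondingly split $Tf = A + B$, where
\[
A(x) = \sum_{i=1}^n \partial_i g(x)\,(H_1^i \ast f)(x), \qquad B(x) = \int_{\mathbb{R}^n} H(x-x')\, r(x,x')\, f(x')\,\diff x'.
\]
The term $A$ is a pointwise product of a $C^\gamma$ function with the convolution of $f$ against the CZ kernel $H_1^i$, while the integrand defining $B$ satisfies $|H(z)\, r(x, x-z)| \le c|\nabla g|_\gamma |z|^{\gamma-n}$ and is locally integrable.

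The bound on $A$ follows from applying Lemma \ref{lemmas4546} to each $H_1^i \ast f$ (which gives $\|H_1^i \ast f\|_\gamma \le c\|f\|_\gamma$ with $c$ depending on $\textrm{m}(\textrm{supp}\,f)$), combined with the algebra property \eqref{417} and $\|\partial_i g\|_\gamma \le |g|_{1,\gamma}$. The $L^\infty$ bound on $B$ is a direct pointwise estimate uniform in $x$ by the rearrangement inequality, yielding $\|B\|_\infty \le c|\nabla g|_\gamma \|f\|_\infty\, \textrm{m}(\textrm{supp}\,f)^{\gamma/n}$. The delicate piece is the H\"older seminorm $|B|_\gamma$: for $x \neq y$ I would split the integral in $B(x) - B(y)$ into a near region $N = \{x' : \min(|x-x'|,|y-x'|) \le 2|x-y|\}$ and the far complement $F$; the near part is bounded directly, and on $F$ I would apply the mean value theorem together with $|\nabla H(z)| \le c|z|^{-n-2}$, the estimate $|x-x'|\sim |y-x'|$, and the expansion
\[
r(x,x') - r(y,x') = [g(x)-g(y) - \nabla g(y)(x-y)] + (\nabla g(y) - \nabla g(x))\cdot(x-x').
\]

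The main obstacle is a residual term of the form $(\partial_i g(x) - \partial_i g(y)) \int_F H_1^i(y-x') f(x')\,\diff x'$, because a naive estimate of the far integral produces a logarithmic factor $\log(R/|x-y|)$ that prevents closing $|B|_\gamma$. I would circumvent this by writing
\[
\int_F H_1^i(y-x') f(x')\,\diff x' = (H_1^i \ast f)(y) - \textrm{p.v.}\!\!\int_{|y-x'| < c|x-y|} H_1^i(y-x') f(x')\,\diff x',
\]
controlling the first summand by Lemma \ref{lemmas4546} (with $\varepsilon \sim R$ in \eqref{4546b}), and, for the second, exploiting the mean-zero property of $H_1^i$ on spheres to replace $f(x')$ by $f(x')-f(y)$, thereby gaining a factor $|y-x'|^\gamma$ that renders the integrand integrable and of size $c|f|_\gamma |x-y|^\gamma$. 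Combined with $|\partial_i g(x)-\partial_i g(y)| \le |\nabla g|_\gamma |x-y|^\gamma$, this closes the seminorm estimate and yields $\|Tf\|_\gamma \le c|g|_{1,\gamma}\|f\|_\gamma$.
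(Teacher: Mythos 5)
Your proposal is correct but takes a genuinely different route from the paper, and the difference is instructive. You Taylor-expand $g$ at the \emph{evaluation} point $x$, producing $A(x)=\sum_i\partial_i g(x)\,(H_1^i\ast f)(x)$ — a pointwise product of $\partial_i g$ with a convolution — plus a remainder $B$. The paper instead Taylor-expands at the \emph{integration} variable $x'$, so that the first-order piece becomes $T_2 f=\sum_i H_1^i\ast (f\,\partial_i g)$ — a \emph{single} convolution of the CZ kernel against the $C^\gamma$ function $f\,\partial_i g$ — to which Lemma~\ref{lemmas4546} applies directly. The payoff of the paper's choice is in the remainder: with expansion at $x'$, $\nabla_x R(x,x')=\nabla g(x)-\nabla g(x')$ gains a factor $|x-x'|^\gamma$, so $|\nabla_x H_g|\lesssim |g|_{1,\gamma}|x-x'|^{\gamma-n-1}$ and the near/far split closes cleanly via the mean value theorem on the kernel, with no residual. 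Your expansion at $x$ destroys this gain: the difference $r(x,x')-r(y,x')$ contains the cross term $(\nabla g(y)-\nabla g(x))\cdot(x-x')$, and you correctly diagnose that this forces a logarithmic factor unless you bring in the mean-zero property of $H_1^i$ and the $C^\gamma$ regularity of $f$ (which the paper never needs for its $T_1$-term, only $\|f\|_{L^\infty}$). Your repair — split off $(H_1^i\ast f)(y)$, bound it via \eqref{4546b} with $\varepsilon\sim R$, and kill the near-ball piece by subtracting $f(y)$ — is sound and does close the estimate, but at the cost of an extra cancellation argument and the additional hypothesis (implicit in "CZO of convolution type", and in any case available here) that $H_1^i$ has vanishing mean on spheres. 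So: same end result, same main ingredients (the CZO bounds of Lemma~\ref{lemmas4546} and a near/far split), but the paper's choice of base point for the Taylor expansion routes \emph{all} the CZ machinery into one place, leaving a remainder that is handled by elementary size estimates alone.
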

\begin{proof}
Since $g\in C^{1,\gamma}$  we can write its Taylor series centered at $x'$ as
$$g(x)=g(x')+\sum_{i=1}^n \partial_i g(x')(x_i-x_i')+R(x,x')$$
with $\left|R(x,x')\right| \le c\left|g\right|_{1,\gamma} \left|x-x'\right|^{1+\gamma}$. Now, if we add and subtract some term we obtain
\begin{equation*}
\begin{split}
Tf(x) = &\int_{\mathbb{R}^n} H(x-x')(g(x)-g(x')-\nabla g(x')\cdot (x-x'))f(x')\,\diff x' + \\
+\sum_{i=1}^n\text{p.v. }&\int_{\mathbb{R}^n} (x_i-x_i')H(x-x')\partial_i g(x') f(x')\,\diff x' =: T_1f(x)+T_2f(x),
\end{split}
\end{equation*}
The kernel $H_g(x,x'):=H(x-x')(g(x)-g(x')-\nabla g(x')\cdot (x-x'))$ satisfies the bound $\left|H_g(x,x')\right|\le \frac{c\left|g\right|_{1,\gamma}}{\left|x-x'\right|^{n-\gamma}}$ and its gradient
\begin{equation*}
\begin{split}
\nabla_x H_g(x,x')&=\nabla H(x-x')(g(x)-g(x')-\nabla g(x')(x-x'))+
\\&+H(x-x')(\nabla g(x)-\nabla g(x')) 
\end{split}
\end{equation*}
satisfies $\left|\nabla_x H_g(x,x')\right|\le \frac{c\left|g\right|_{1,\gamma}}{\left|x-x'\right|^{n+1-\gamma}}$. 
To check that $T_1f$ belongs to $C^{\gamma}$ we use an usual argument. We can see $\left|T_1f(x)\right|\le c\left|\left|f\right|\right|_{L^{\infty}} \left|g\right|_{1,\gamma}$ for every $x\in \mathbb{R}^n$. Then $\left|\left|T_1f\right|\right|_{L^{\infty}}\le c\left|\left|f\right|\right|_{L^{\infty}}\left|g\right|_{1,\gamma}$. Now, let $x_1, x_2\in \mathbb{R}^n$ and \linebreak$B:=B(x_1,3\left|x_1-x_2\right|)$. We decompose
\begin{equation*}
\begin{split}
T_1f(x_1)-T_1f(x_2) &= \int_{\mathbb{R}^n\setminus B} (H_g(x_1,x')-H_g(x_2,x'))f(x')\,\diff x'+\\
&+\int_B H_g(x_1,x')f(x')\,\diff x'-\int_B H_g(x_2,x')f(x')\,\diff x'.
\end{split}
\end{equation*}

Then, by the Mean Value Theorem and the bounds for $H_g$ and $\nabla_x H_g$, we have
\begin{equation*}
\begin{split}
&\left|T_1f(x_1)-T_1f(x_2)\right| \le c\left|g\right|_{1,\gamma}\Big\{ \left|x_1-x_2\right|\int_{\mathbb{R}^n\setminus B} \frac{\left|f(x')\right|}{\left|x_1-x'\right|^{n+1-\gamma}}\,\diff x'+\\
&+\int_B \frac{\left|f(x')\right|}{\left|x_1-x'\right|^{n-\gamma}}\,\diff x' + \int_B \frac{\left|f(x')\right|}{\left|x_2-x'\right|^{n-\gamma}}\,\diff x'\Big\} \le \\
&\le c \left|g\right|_{1,\gamma}\left\{\left|x_1-x_2\right| \left|\left|f\right|\right|_{L^{\infty}} \left|x_1-x_2\right|^{\gamma-1}+\left|\left|f\right|\right|_{L^{\infty}} \left|x_1-x_2\right|^{\gamma}\right\}\le \\
&\le c\left|g\right|_{1,\gamma}\left|\left|f\right|\right|_{L^{\infty}}\left|x_1-x_2\right|^{\gamma}.
\end{split}
\end{equation*}
So $\left|T_1f\right|_{\gamma} \le c \left|\left|f\right|\right|_{L^{\infty}}\left|g\right|_{1,\gamma}$. To finish we need to bound $T_2f$. If we set \linebreak$H_1^i(x)=x_iH(x)$, which is a CZO of convolution type by hypothesis, then since $T_2 f= \sum_{i=1}^n H_1^i \ast (f\partial_i g)$, by Lemma \ref{lemmas4546} we have
$$\left|\left| T_2 f \right|\right|_{\gamma} \le c \left|\left| f\nabla g\right|\right|_{\gamma} \le c \left|g\right|_{1,\gamma} \left|\left|f\right|\right|_{\gamma},$$
finishing the proof of the lemma.

\end{proof}

The constant $c$ in Lemma \ref{lemmahyper} is finite whenever $f$ is compactly supported, but we know this is true by Lemma \ref{lemmasupport}. 
In a few words, Lemma \ref{lemmasupport} states that $\rho(\cdot,t)$ is compactly supported when it is transported by a flow with bounded gradient as it happens, in particular, if $X\in O_M$. We are ready to check that $F'(X)$ is bounded. As promised, taking into account that boundedness, we can verify that the second hypothesis in Picard-Lindel\"of holds for $F$.
\begin{proposition}
Let $O_M$ as defined in \eqref{O_M}. Then, the functional \linebreak$F:O_M \to C^{1,\gamma}(\mathbb{R}^n;\mathbb{R}^n)$ defined in \eqref{functional} is locally Lipschitz.
\label{Flips}
\end{proposition}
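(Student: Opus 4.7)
The plan is to reduce local Lipschitz continuity to an operator bound on the directional derivative. By a standard mean value theorem in Banach spaces, if we can show that for every $X \in O_M$ and every $Y \in B$
$$\left|\left|F'(X)Y\right|\right|_{1,\gamma} \le C(M, \rho_0, \left|X\right|_{1,\gamma}) \left|\left|Y\right|\right|_{1,\gamma},$$
with $C$ bounded on bounded subsets of $O_M$, then integrating along the segment joining $X_1$ and $X_2$ (which stays in a common $O_{M'}$ if $X_1, X_2$ are close enough in $C^{1,\gamma}$) yields the desired Lipschitz estimate. The essential task is therefore to bound the $C^{1,\gamma}$ norm of each component $F_j'(X)Y$ using the decomposition $F_j'(X)Y = \mathrm{I} + \mathrm{II}$ established in Proposition \ref{dirder}.

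The term $\mathrm{II}$ is handled almost exactly as in the proof of Proposition \ref{Fmaps}. Indeed, after the change of variables $x' = X(\alpha')$ each summand becomes $(k_j \ast g_{r,s})(X(\alpha))$ for
$$g_{r,s}(x') = \rho_0(X^{-1}(x')) \partial_s Y_r(X^{-1}(x')) \det(DX_{r,s}^c)(X^{-1}(x')) \det(DX)^{-1}(X^{-1}(x')),$$
so differentiation in $\alpha$ produces expressions of the same shape as in \eqref{jc240201}, involving $\rho_0$, $\partial_i X_r$, and principal value convolutions $\partial_r k_j \ast g_{r,s}$. The estimates \eqref{4546a}--\eqref{4546c} of Lemma \ref{lemmas4546}, together with \eqref{416}--\eqref{422} and Lemma \ref{lemmasupport} to control the support, then give a bound of the form $\left|\left|\mathrm{II}\right|\right|_{1,\gamma} \le C(M,\rho_0,\left|X\right|_{1,\gamma}) \left|\left|\nabla Y\right|\right|_{\gamma}$.

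For term $\mathrm{I}$, the $L^\infty$ bound on $\tilde{I}_i(x) = I_i(X^{-1}(x))$ follows directly from integrating $|\nabla k_j(x-x')| \, |Y_i \circ X^{-1}(x) - Y_i \circ X^{-1}(x')|$ against $\rho_0 \circ X^{-1}$: the $Y$-difference contributes $\left|\left|\nabla Y \circ X^{-1}\right|\right|_{L^\infty} |x-x'|$, which exactly cancels the $|x-x'|^{-n}$ singularity of $\nabla k_j$, leaving a locally integrable kernel on the compact support. For the derivatives, we use the explicit formula in Lemma \ref{derofI}: the third term is handled by the algebra property \eqref{417} since both $\nabla(Y_i \circ X^{-1})$ and $\rho_0 \circ X^{-1}$ lie in $C^\gamma$ by \eqref{422} and \eqref{418}; the second term is a product of a $C^{\gamma}$ function with a classical CZO applied to $\rho_0 \circ X^{-1}$, which by Lemma \ref{lemmas4546} is in $C^\gamma$; the first term, containing the hypersingular kernel $\partial_k \partial_i k_j$ (homogeneous of degree $-n-1$), is exactly of the type treated in Lemma \ref{lemmahyper} with $g = Y_i \circ X^{-1}$ and $f = \rho_0 \circ X^{-1}$, since $x_l\, \partial_k \partial_i k_j(x)$ is homogeneous of degree $-n$ with vanishing mean on the sphere (by the argument of Remark \ref{remarkJVcanc}) and hence defines a convolution CZO.

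Combining these bounds, using \eqref{418}--\eqref{422} to control $X^{-1}$ by $X$ in $C^{1,\gamma}$, we obtain
$$\left|\left|F'(X)Y\right|\right|_{1,\gamma} \le C(M,\rho_0,\left|X\right|_{1,\gamma}) \left|\left|Y\right|\right|_{1,\gamma},$$
with $C$ a polynomial in $\left|X\right|_{1,\gamma}$ and in $\left|\left|\rho_0\right|\right|_{\gamma}$ (and depending on $M$ through the lower bound on $|\det DX|$ needed to invoke \eqref{418}). The main technical obstacle is precisely the first term of $\partial_k \tilde{I}_i$: the hypersingular integral only makes sense thanks to the cancellation provided by $Y_i \circ X^{-1}(x) - Y_i \circ X^{-1}(x')$ and the vanishing mean condition on $\partial_k \partial_i k_j$, so Lemma \ref{lemmahyper} must be applied with care, tracking that $\left|Y \circ X^{-1}\right|_{1,\gamma}$ is controlled by $\left|Y\right|_{1,\gamma}$ times a power of $\left|X\right|_{1,\gamma}$. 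This gives the required local Lipschitz property of $F$ on $O_M$.
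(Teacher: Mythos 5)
Your proposal is correct and follows essentially the same route as the paper: reducing local Lipschitz continuity to an operator bound on $F'(X)$ via integration along the segment, splitting $F_j'(X)Y=\mathrm{I}+\mathrm{II}$ as in Proposition \ref{dirder}, treating $\mathrm{II}$ by repeating the argument of Proposition \ref{Fmaps}, and handling $\mathrm{I}$ through the three-term formula of Lemma \ref{derofI}, with the hypersingular piece controlled by Lemma \ref{lemmahyper} after checking that $x_l\,\partial_k\partial_i k_j$ defines a convolution CZO (the paper does this verification in the remark following the proposition, exactly as you indicate). Your slightly more explicit bookkeeping of the Jacobian factor in $\mathrm{II}$ and of the $L^\infty$ bound for $\mathrm{I}$ is consistent with, and if anything more careful than, the paper's treatment.
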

\begin{proof}
First of all, by the Fundamental Theorem of Calculus, given \linebreak$X_1, X_2 \in O_M$,
\begin{equation*}
\begin{split}
\left|F(X_1)-F(X_2)\right|_{1,\gamma} &= \left|\int_0^1 \frac{\diff}{\diff \varepsilon} F(X_1+\varepsilon(X_2-X_1))\,\diff \varepsilon \right|_{1,\gamma} = \\
&=\left|\left(\int_0^1 F'(X_1+\varepsilon(X_2-X_1))\cdot(X_2-X_1)\,\diff \varepsilon\right) \right|_{1,\gamma} \le \\
&\le \left\{\int_0^1 \left|\left|F'(X_1+\varepsilon(X_2-X_1))\right|\right|_{B\to B}\,\diff \varepsilon\right\}\left|X_2-X_1\right|_{1,\gamma},
\end{split}
\end{equation*}
where $F'(X)$ is the operator defined  by $Y \to F'(X)Y$. So, provided \linebreak$\left|\left|F'(X)\right|\right|_{B \to B}<\infty$  for every $X\in B$ then the integral in the previous expression is finite and therefore $F$ is Lipschitz. Thus, it suffices to prove this boundedness in order to prove the Proposition.

By Proposition \ref{dirder} we have seen that every component of $F'(X)Y$ can be written as the sum of two terms $I$ and $II$. The arguments in Proposition \ref{Fmaps} can be repeated for each element appearing in the sum in which $II$ is decomposed,  just by changing the role of $\rho_0$ to $\rho_0 \partial_sY_r$. Then we can conclude, similarly, that
$$\left|II\right|_{1,\gamma} \le c(n,\rho_0)\left|Y\right|_{1,\gamma} \left|X\right|_{1,\gamma}^n. $$

Hence, we just have to work with the first term, $I$, and bound its $C^{1,\gamma}$ norm. Before considering  derivatives, note that $I$ can be compared to any derivative of  $II$, so also in similar fashion to Proposition \ref{Fmaps}, we get
$$\left|\left|I\right|\right|_{L^{\infty}} \le c(n,\rho_0)\left|Y\right|_{1,\gamma} \left|X\right|_{1,\gamma}^n. $$

We then need to consider derivatives of $I$. By Lemma \ref{derofI} we write \linebreak$I = \sum_{i=1}^n I_i$ and also $\partial_l I_i(\alpha)=\sum_{k=1}^n \partial_k \tilde{I}_i(X(\alpha))\partial_l X_k(\alpha)$. Since $C^{\gamma}$ is an algebra  and also, by \eqref{421}, we have
\begin{equation*}
\begin{split}
\left|\left|\partial_i I_i \right|\right|_{\gamma} &\le \sum_{k=1}^n \left|\left|\partial_k \tilde{I}_i\circ X\right|\right|_{\gamma}\left|\left|\partial_l X_k\right|\right|_{\gamma} \le \\
&\le \sum_{k=1}^n \left|\left|\partial_k \tilde{I}_i\right|\right|_{\gamma}(1+\left|X\right|_{1,\gamma}^{\gamma})\left|X\right|_{1,\gamma}.
\end{split}
\end{equation*}
Now we focus on $\left|\left|\partial_k \tilde{I}_i\right|\right|_{\gamma}$. We consider the expression given by Lemma \ref{derofI}
\begin{equation*}
\begin{split}
&\partial_k \tilde{I}_i(x) =\\
&= \text{p.v.}\int_{\mathbb{R}^n} \partial_k\partial_ik_j(x-x')(Y_i(X^{-1}(x))-Y_i(X^{-1}(x')))(\rho_0\circ X^{-1})(x')\,\diff x' + \\
&+ \partial_k[Y_i \circ X^{-1}](x)\text{p.v.}\int_{\mathbb{R}^n} \partial_ik_j(x-x')(\rho_0\circ X^{-1})(x')\,\diff x'  + \\
&+c_k (\nabla[Y_i \circ X^{-1}](x)\cdot \xi_k )(\rho_0\circ X^{-1})(x)=A(x)+B(x)+C(x).
\end{split}
\end{equation*} 
A straightforward repetition of the arguments done before let us verify that 
$$\left|\left|C\right|\right|_{\gamma} \le c(n,\rho_0)\left|Y\right|_{1,\gamma} \left|X\right|_{1,\gamma}^m,$$
 where $m$ is a finite constant depending on $\gamma$ and $n$.
 Also, as in Proposition \ref{Fmaps}, we get
$$\left|\left|B\right|\right|_{\gamma} \le c(n,\rho_0)\left|Y\right|_{1,\gamma} \left|X\right|_{1,\gamma}^m.$$

The term $A$ is more involved than the rest in the decomposition of $\partial_k\tilde{I}_i$. Nevertheless, we claim that we can apply Lemma \ref{lemmahyper} for $H=\partial_k\partial_i k_j$, \linebreak$g=Y_i\circ X^{-1}$ and $f=\rho_0\circ X^{-1}$ to obtain also 
$$\left|\left|A\right|\right|_{\gamma} \le c(n,\rho_0, R)\left|Y\right|_{1,\gamma} \left|X\right|_{1,\gamma}^m,$$ where $R=\text{m(supp(}\rho_0\circ X^{-1}))=\text{m(supp(}\rho))$. We  conclude that
$$\left|I\right|_{1,\gamma} \le c(n,\rho_0,R)\left|Y\right|_{1,\gamma}\left|X\right|_{1,\gamma}^n, $$
where $c(n,\rho,R)$ is  finite  by Lemma \ref{lemmasupport}.

Summing up, for any $X\in B$ and any $Y\in O_M$ we have seen
$$\left|F'(X)Y\right|_{1,\gamma} \le c(n,\rho_0,R) \left|X\right|_{1,\gamma}^m \left|Y\right|_{1,\gamma}, $$
so $F'(X)$ is bounded as a linear operator from $O_M$ to $B$ and then the Proposition is proved.
 \end{proof}
 
 \begin{remark}
We have been able to apply Lemma \ref{lemmahyper} since the kernels 
\linebreak$x_i\partial_j\partial_l k$ are CZO. In general, if $k$ satisfies the hypothesis in Lemma \ref{lemmas4546} then its second derivatives $\partial_j\partial_l k$ satisfies Lemma \ref{lemmahyper}. It is clear that $\partial_j\partial_l k$ is homogeneous of degree $-n-1$. Also, if $i\neq j$ (and similarly if $i\neq l$),
$$\int_{\left|w\right|=1} w_i\partial_j\partial_l k(w) \,\diff \sigma(w) =  \int_{\left|w\right|=1} \partial_j[w_i\partial_l k(w)] \,\diff \sigma(w)=0.$$
The last integral vanishes for similar reasons as explained in Remark \ref{remarkJVcanc}. \linebreak Otherwise, if $i=j=l$,
\begin{equation*}
\begin{split}\int_{\left|w\right|=1} &w_i\partial_i^2k(w) \,\diff \sigma(w) = \\
&=\int_{\left|w\right|=1} \partial_i[w_i\partial_i k(w)] \,\diff \sigma(w)- \int_{\left|w\right|=1} \partial_i k(w) \,\diff \sigma(w)=0
\end{split}
\end{equation*}
since both integrals in the right vanishes, similarly as before.  
 \end{remark}
Finally, since all the hypothesis in Theorem \ref{picard} are verified, we  prove the existence result for the trajectory maps.
\begin{theorem}
Let $\rho_0\in C^{\gamma}_c(\mathbb{R}^n;\mathbb{R})$. Then there exists $T^*>0$ such that the ordinary differential equation
\begin{equation*}
\begin{cases}
\frac{\diff }{\diff t} X(\alpha,t)=F(X(\alpha,t)),\\
X(\alpha,0) = \alpha,
\end{cases}
\end{equation*}
for $$F(X(\alpha,t))=\int_{\mathbb{R}^n} k(X(\alpha,t)-X(\alpha',t))\rho_0(\alpha')\,\det[DX(\alpha',t)]\,\diff \alpha',$$
has a unique solution  $X(\cdot,t)\in C^{1,\gamma}(\mathbb{R}^n;\mathbb{R}^n)$ for $t\in (-T^*,T^*)$.
\label{thmlocaltraj}
\end{theorem}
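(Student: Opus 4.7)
The plan is simply to stitch together the preparatory results so that Picard--Lindelöf's theorem (Theorem \ref{picard}) applies directly. I would take the Banach space to be $B = C^{1,\gamma}(\mathbb{R}^n;\mathbb{R}^n)$ and the open subset to be $O_M$ as defined in \eqref{O_M}, for some fixed $M>1$. Since the initial condition is $X(\cdot,0) = \mathrm{Id}$ and the identity map satisfies $|X(\alpha)-X(\beta)|/|\alpha-\beta| = 1$, we have $\mathrm{Id}\in O_M$ for every $M>1$, so the initial datum lies in the required open set (and in fact belongs to $B$ after interpreting the identity modulo an additive constant if needed, or working with $X-\mathrm{Id}$ as the unknown in $B$).

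Next I would verify the two hypotheses of Theorem \ref{picard}. The fact that $F:O_M\to B$ is exactly the content of Proposition \ref{Fmaps}: the $L^\infty$ bound on $F(X)$ follows from \eqref{4546a} after the change of variables $x'=X(\alpha')$, and the $C^\gamma$ bound on $\nabla F(X)$ follows from the decomposition \eqref{jc240201} together with \eqref{4546b}--\eqref{4546c} applied to the singular integrals $S_{rj}$, since $C^\gamma$ is an algebra by \eqref{417}. The local Lipschitz property is furnished by Proposition \ref{Flips}, which bounds the operator norm of the directional derivative $F'(X)$ uniformly for $X$ in a neighbourhood of a fixed $X_0 \in O_M$, and then uses the Fundamental Theorem of Calculus to pass from a bounded derivative to a Lipschitz estimate.

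With both hypotheses in hand, Picard--Lindelöf yields a time $T^*>0$ and a unique $X\in C^1((-T^*,T^*);O_M)$ solving $\tfrac{\mathrm{d}}{\mathrm{d}t}X = F(X)$ with $X(\cdot,0)=\mathrm{Id}$, and by construction this $F$ is precisely the right-hand side of the ODE in the statement, obtained from \eqref{odeflow} via the change of variables $x'=X(\alpha',t)$ together with the transport identity $\rho(X(\alpha',t),t)=\rho_0(\alpha')$. Since $X(\cdot,t)\in O_M \subset C^{1,\gamma}(\mathbb{R}^n;\mathbb{R}^n)$ for every $t\in(-T^*,T^*)$, the desired regularity of the trajectory map is automatic.

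There is essentially no new obstacle to overcome here: all the serious work has already been done in Propositions \ref{Fmaps} and \ref{Flips}, and the only thing the proof really does is assemble these ingredients, note that $\mathrm{Id}\in O_M$, and invoke Theorem \ref{picard}. If anything, the one place where care is still needed is checking that the initial condition truly lies in the open set $O_M$ for a single $M$, but since the identity sits at the centre of $O_M$ for every $M>1$, this is trivially satisfied.
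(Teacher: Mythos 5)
Your proposal is correct and follows essentially the same route as the paper: the paper's proof simply takes $B = C^{1,\gamma}(\mathbb{R}^n;\mathbb{R}^n)$ and $O_M$ from \eqref{O_M}, cites Propositions \ref{Fmaps} and \ref{Flips} for the hypotheses of Theorem \ref{picard}, and concludes. Your extra remarks (that $\mathrm{Id}\in O_M$ and that $F$ matches the right-hand side of \eqref{odeflow} via the change of variables) are harmless elaborations; note in particular that with the paper's norm $\left|\cdot\right|_{1,\gamma}$ the identity already lies in $B$, so no adjustment by an additive constant is needed.
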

\begin{proof}
Let $B = C^{1,\gamma}(\mathbb{R}^n;\mathbb{R}^n)$ and let $O_M$ defined in \eqref{O_M}. Then, by Propositions \ref{Fmaps} and \ref{Flips} the functional $F$ satisfies the hypothesis of Picard-Lindel\"of's theorem \ref{picard} and therefore we  conclude that the statement holds.
\end{proof}
Given the flow map $X(\cdot,t)$ we can define the solution to \eqref{generaltreq} in an unique way: since the velocity field is smooth enough, any solution of the transport equation \eqref{generaltreq} can be described through the trajectories. So we have the main result of this section:  well-posedness in the H\"older class for the transport equation and for the class of kernels described above.
\begin{theorem}
Let $\rho_0 \in C^{\gamma}_c(\mathbb{R}^n;\mathbb{R})$. Let $k\in C^2(\mathbb{R}^n\setminus\{0\};\mathbb{R}^n)$ be homogeneous of degree $1-n$ . Then there exists $T^*>0$ such that the transport equation
\begin{equation}
\begin{cases}
\rho_t+v\cdot \nabla \rho = 0,\\
v(\cdot,t)=k\ast\rho(\cdot,t),\\
\rho(\cdot,0) = \rho_0.
\end{cases}
\label{eqofthm}
\end{equation}
has a unique solution $\rho(\cdot,t) \in C^{\gamma}_c(\mathbb{R}^n;\mathbb{R})$, $v(\cdot,t) \in C^{1+\gamma}(\mathbb{R}^n;\mathbb{R}^n)$ for \linebreak$t\in (-T^*,T^*)$.
\label{localrhov}
\end{theorem}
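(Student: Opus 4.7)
The plan is to convert the flow-map result already obtained in Theorem \ref{thmlocaltraj} into a statement about the pair $(\rho,v)$. Since Theorem \ref{thmlocaltraj} delivers, on some interval $t\in(-T^*,T^*)$, a flow $X(\cdot,t)\in O_M\subset C^{1,\gamma}(\mathbb{R}^n;\mathbb{R}^n)$, and since every $X(\cdot,t)\in O_M$ is a bi-Lipschitz homeomorphism of $\mathbb{R}^n$, I would simply set
\[ \rho(x,t) := \rho_0(X^{-1}(x,t)),\qquad v(\cdot,t) := k\ast \rho(\cdot,t). \]
The H\"older regularity $\rho(\cdot,t)\in C^{\gamma}$ is immediate from \eqref{422}, and the compactness of its support is Lemma \ref{lemmasupport} (the support is $X(\mathrm{supp}\,\rho_0,t)$, whose measure is controlled by $\|\nabla X(\cdot,t)\|_{L^{\infty}}^{n}\,m(\mathrm{supp}\,\rho_0)$).

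For the velocity, $\|v\|_{L^{\infty}}\le c R\|\rho\|_{L^{\infty}}$ comes from \eqref{4546a}. For the first derivatives, Lemma \ref{distrder} applied to $v_j = k_j\ast\rho$ gives
\[ \partial_i v_j(x,t) = c_{ij}\,\rho(x,t) + \mathrm{p.v.}\,(\partial_i k_j\ast \rho(\cdot,t))(x). \]
Since $\partial_i k_j$ is homogeneous of degree $-n$ and has vanishing spherical mean (Remark \ref{remarkJVcanc}), the bounds \eqref{4546b}--\eqref{4546c} of Lemma \ref{lemmas4546} produce $\partial_i v_j\in C^{\gamma}$. Therefore $v(\cdot,t)\in C^{1+\gamma}(\mathbb{R}^n;\mathbb{R}^n)$.

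To verify that $(\rho,v)$ solves \eqref{eqofthm}, I would use the defining identity $\rho(X(\alpha,t),t)=\rho_0(\alpha)$. Differentiating in $t$ and invoking the ODE of Theorem \ref{thmlocaltraj} yields $\rho_t(X(\alpha,t),t)+\nabla\rho(X(\alpha,t),t)\cdot F(X(\alpha,t))=0$. The key consistency check is that the functional $F$ obtained from the change of variables in the construction agrees with $v$ on the image of the flow, i.e.\ $F(X(\alpha,t))=(k\ast\rho(\cdot,t))(X(\alpha,t))=v(X(\alpha,t),t)$; this is essentially the computation preceding Theorem \ref{picard}. Since the trajectories cover all of $\mathbb{R}^n$, the transport equation holds pointwise.

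For uniqueness, suppose $(\rho,v)$ is any solution in $C^{\gamma}_c\times C^{1+\gamma}$ with the given initial datum. Then $v$ is Lipschitz in space, so it admits its own flow $\widetilde X(\cdot,t)$; a standard argument shows $\rho=\rho_0\circ\widetilde X^{-1}$, and plugging this into $v=k\ast\rho$ and changing variables back via $\widetilde X$ shows that $\widetilde X$ solves the ODE governed by the functional $F$. The uniqueness clause of Theorem \ref{thmlocaltraj} then forces $\widetilde X$ to coincide with the flow already constructed, and hence $\rho$ and $v$ are uniquely determined. The main obstacle I anticipate is precisely this last step: one must verify carefully that $\widetilde X(\cdot,t)$ stays inside the set $O_M$ for $t$ in some (possibly shorter) interval, so that Theorem \ref{thmlocaltraj} can actually be invoked; this requires a small a priori control of $\|\widetilde X\|_{1,\gamma}$ derived from $\|v\|_{C^{1+\gamma}}$ and continuity in $t$ at $t=0$, where $\widetilde X=\mathrm{Id}\in O_M$.
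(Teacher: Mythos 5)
Your proposal is correct and follows essentially the same route as the paper: use Theorem \ref{thmlocaltraj} to get the flow, define $\rho=\rho_0\circ X^{-1}$ and $v=k\ast\rho$, and for uniqueness pass any candidate solution $(\tilde\rho,\tilde v)$ to its own trajectory map, identify it with $X$ via the uniqueness clause of Theorem \ref{thmlocaltraj}, and conclude $\tilde\rho=\rho$, $\tilde v=v$. The paper's proof is just a terser version of this argument (it does not even spell out the regularity of $v$ or the $O_M$ membership issue you flag), so your extra care is welcome but not a departure.
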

\begin{proof}
By Theorem \ref{thmlocaltraj} up to time $T^*$ there exists a unique solution \linebreak$X(\cdot,t)\in C^{1+\gamma}$. Let $\tilde{\rho} \in C^{\gamma}_c$ and $\tilde{v} \in C^{1+\gamma}$ satisfying \eqref{eqofthm}. Then we can find a trajectory $\tilde{X}(\cdot,t)$ associated to $\tilde{v}(\cdot,t)$ such that $\tilde{\rho}$ is transported by $\tilde{X}$. By the uniqueness of trajectory by Theorem \ref{thmlocaltraj} then $\tilde{X} = X$. Then $\tilde{\rho}(\cdot,t) = \rho_0 (\tilde{X}^{-1}(\cdot,t)) = \rho_0 (X^{-1}(\cdot,t)) = \rho(\cdot,t)$ and hence, by convoluting the density with the kernel $k$ we can see that $\tilde{v} = v$.
\end{proof}

\section{Global Theorem}
\label{globalness}
We want to show that the solution defined in Theorem \ref{thmlocaltraj} does exist for any time, that is, we want to show that $T^*=\infty$. In order to do that, we need to invoke a Continuation Theorem which gives us a necessary condition for that to happen. The theorem is stated as in \cite[p. 148]{MB} and a proof for a general version of it can be found in \cite[p. 161]{ladas}. We would like to remark that it is valid since we have been able to state the problem with a functional $F$ which does not depend explicitly on time.
\begin{theorem}
In the situation of Theorem \ref{picard} the unique solution \linebreak$X\in C^1((-T,T);O)$ either exists globally in time or $T$ is finite and $X(t)$ leaves the open set $O$ as $\left|t\right|$ approaches $T$.
\label{continuation}
\end{theorem}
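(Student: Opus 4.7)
The plan is to argue by contradiction: suppose $T<\infty$ and the solution $X(t)$ does not leave $O$ as $t\to T^{-}$, meaning there is a compact set $K\subset O$ and a sequence $t_n\nearrow T$ with $X(t_n)\in K$. The case $t\to -T^{+}$ will be symmetric. First I would thicken $K$: since $O$ is open and $K\subset O$ compact, there exists $\delta>0$ such that the closed $\delta$-neighborhood $K_\delta:=\{Y\in B:\mathrm{dist}(Y,K)\le \delta\}$ still lies in $O$. A standard cover-and-Lebesgue-number argument, combined with the local Lipschitz hypothesis on $F$, then produces constants $L,M>0$ such that $F$ is $L$-Lipschitz on $K_\delta$ and $\|F\|_{B}\le M$ throughout $K_\delta$.

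Next I would revisit the Banach-fixed-point proof of Theorem \ref{picard} to extract a uniform-in-$X_0$ version. For any $X_0\in K$, consider the Picard map
$$\Phi(Y)(t)=X_0+\int_0^t F(Y(s))\,\diff s$$
acting on the closed ball of $C^0([-\tau,\tau];B)$-radius $\delta$ centered at the constant function $X_0$. Any $Y$ in this ball takes values in $K_\delta$, so $\|\Phi(Y)(t)-X_0\|_B\le \tau M$ and $\|\Phi(Y_1)-\Phi(Y_2)\|_{C^0}\le \tau L\|Y_1-Y_2\|_{C^0}$. Hence $\Phi$ is a self-map and a contraction whenever $\tau\le \delta/M$ and $\tau<1/L$, so $\tau:=\min(\delta/M,1/(2L))$ works for \emph{every} starting point $X_0\in K$ simultaneously. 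This uniform lower bound on the local existence time, valid across all initial conditions in $K$, is the key ingredient.

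To conclude, fix $n$ large enough that $T-t_n<\tau$. Solving the ODE with initial condition $X(t_n)$ placed at time $t_n$—legitimate because $F$ has no explicit time dependence, as remarked just before the statement—yields a solution on $[t_n-\tau,t_n+\tau]$. By the uniqueness clause of Theorem \ref{picard}, this new solution agrees with $X$ on the overlap $[t_n,T)$, and concatenating produces a $C^1$ extension of $X$ to the interval $[0,t_n+\tau)$, which strictly contains $[0,T)$, contradicting the maximality of $T$. The main obstacle is bookkeeping rather than conceptual: one has to inspect the Banach-fixed-point proof of Picard-Lindel\"of carefully enough to see that the local existence time can be chosen to depend only on the local Lipschitz constant and local bound of $F$ in a neighborhood of $K$, not on the particular initial condition. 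Once that uniform-time version is in hand, the continuation step is automatic.
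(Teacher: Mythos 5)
The paper never proves Theorem \ref{continuation}: it is quoted from \cite[p.\ 148]{MB} and the proof is delegated to \cite[p.\ 161]{ladas}, so there is no internal argument to compare yours against. Your proof is the standard continuation argument and is essentially correct for the reading in which ``$X(t)$ leaves the open set $O$'' means ``$X(t)$ eventually leaves every compact subset of $O$'': cover $K$ by finitely many balls on which $F$ is Lipschitz, extract uniform constants and hence a uniform local existence time $\tau$ valid for every initial datum in $K$, then restart at some $t_n$ with $T-t_n<\tau$ (legitimate because $F$ is autonomous) and glue by uniqueness to contradict maximality. One small inaccuracy: the covering argument does not make $F$ globally $L$-Lipschitz on the neighborhood $K_\delta$ (two points of $K_\delta$ need not lie in a common ball of the cover); what it gives is that every ball $B(X_0,\delta)$, $X_0\in K$, is contained in one of finitely many balls on which $F$ is Lipschitz and bounded, with constants $L,M$ uniform in $X_0$. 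That is exactly what your Picard map uses, so the proof survives, but it should be phrased that way.

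The substantive caveat is that the compact-escape version you prove is strictly weaker than what the paper needs in Theorem \ref{thmglobaltraj}. There one only shows that $X(\cdot,t)$ stays in $O_{M'}$ with $\left|X(\cdot,t)\right|_{1,\gamma}$ bounded; in the infinite-dimensional space $B=C^{1,\gamma}$ bounded closed sets are far from compact, so ``$X(t)$ keeps returning to a compact subset of $O$'' can never be verified from such a priori bounds, and your dichotomy would not yield global existence. The version actually used requires running the same uniform-time argument on sets of the form $\{Y\in O:\ \left|Y\right|_{1,\gamma}\le R,\ \mathrm{dist}(Y,B\setminus O)\ge\delta\}$, and for those sets mere local Lipschitz continuity of $F$ does not provide uniform constants $L,M$ in a Banach space. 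In the present paper this is rescued by the quantitative estimates behind Propositions \ref{Fmaps} and \ref{Flips}, whose constants depend only on $\left|X\right|_{1,\gamma}$ (together with $n$, $\rho_0$), so that $F$ is bounded and uniformly Lipschitz on bounded subsets of $O_M$; with that input your scheme goes through verbatim with the bounded set replacing the compact set $K$. So either prove that stronger formulation directly, or state explicitly which meaning of ``leaves $O$'' your argument establishes, since the two are not equivalent here.
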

In a nutshell, we need to check that at time $T^*$ the flow $X(\cdot,T^*)$ still belongs to $O_M$. As we will see later, it is enough to verify that the $C^{1,\gamma}$ norm of $X(\cdot,T^*)$ is a priori bounded. The following  lemma is an auxiliary result needed in order to achieve bounds that allow us to prove that boundedness.
\begin{lemma}
Let $X(\cdot,t)$ defined in \eqref{odeflow}. Then for the inverse flow at time $t$, we have $X^{-1}(\cdot,t)=\tilde{X}(0,t,\cdot)$, where $\tilde{X}(s,t,x)$ is the solution of the integro-differential equation 
$$\tilde{X}(s,t,x) = x-\int_s^t v(\tilde{X}(r,t,x),r)\,\diff r. $$
\label{lemmainverseflow}
\end{lemma}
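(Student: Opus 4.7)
The plan is to reinterpret the integro-differential equation as a backward ODE running the trajectories of $v$ from time $t$ down to time $0$, and then invoke the uniqueness of trajectories to identify the result with $X^{-1}(\cdot,t)$. Concretely, for fixed $t$ and $x$ I would first define $\tilde X(s,t,x)$ as the (unique) solution of the ODE
\begin{equation*}
\frac{\diff}{\diff s}\tilde X(s,t,x)=v(\tilde X(s,t,x),s),\qquad \tilde X(t,t,x)=x,
\end{equation*}
posed on $s\in(-T^*,T^*)$. Since $v(\cdot,s)\in C^{1+\gamma}$ (hence locally Lipschitz in space) by Theorem \ref{localrhov}, this ODE has a unique solution on the same interval of existence as the original flow.

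Next, I would integrate this ODE from $s$ to $t$. Using the terminal condition $\tilde X(t,t,x)=x$, the Fundamental Theorem of Calculus gives
\begin{equation*}
x-\tilde X(s,t,x)=\int_s^t v(\tilde X(r,t,x),r)\,\diff r,
\end{equation*}
which is precisely the stated integro-differential equation. Conversely, any continuous $\tilde X(\cdot,t,x)$ satisfying that integral equation is automatically $C^1$ in $s$ (because the integrand is continuous) and differentiates back to the same ODE with the same terminal condition, so the two formulations are equivalent; uniqueness on the integral side follows from the Lipschitz estimate on $v$ by a standard Picard/Gronwall argument.

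To finish, I would identify $\tilde X(s,t,x)$ with a piece of the original flow. Set $\beta:=X^{-1}(x,t)$, which is well defined because Theorem \ref{thmlocaltraj} produces $X(\cdot,t)\in O_M$, in particular a homeomorphism. Define $Y(s):=X(\beta,s)$. Then $Y$ also solves the ODE $\dot Y(s)=v(Y(s),s)$ and satisfies $Y(t)=X(\beta,t)=x$, so by uniqueness $Y(s)=\tilde X(s,t,x)$ for all $s$ in the interval of existence. Evaluating at $s=0$ yields
\begin{equation*}
\tilde X(0,t,x)=Y(0)=X(\beta,0)=\beta=X^{-1}(x,t),
\end{equation*}
which is the claim of the lemma.

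The argument is essentially routine given Picard-Lindelöf for ODEs with Lipschitz right-hand side; the only mildly delicate point is justifying the equivalence between the backward ODE and the integral equation on the full time interval $(-T^*,T^*)$ and checking that the interval of existence of $\tilde X(\cdot,t,x)$ coincides with that of $X(\cdot,t)$. This follows because the integral curves of $v$ are the same object parametrized either by initial or by terminal condition, so no new blow-up can occur.
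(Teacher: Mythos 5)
Your proof is correct, but it takes a genuinely different route from the paper. You read the integral equation as the backward characteristic ODE $\frac{\diff}{\diff s}\tilde X(s,t,x)=v(\tilde X(s,t,x),s)$ with terminal condition $\tilde X(t,t,x)=x$, and then identify $\tilde X(s,t,x)$ with $X(X^{-1}(x,t),s)$ by Picard--Lindel\"of uniqueness for the spatially Lipschitz field $v$ (legitimate, since $v(\cdot,s)\in C^{1+\gamma}$ with $\left|\left|\nabla v(\cdot,s)\right|\right|_{L^{\infty}}$ locally bounded in $s$, and $v$ is continuous in time because $X\in C^1((-T,T);C^{1,\gamma})$); evaluating at $s=0$ gives $X^{-1}(\cdot,t)=\tilde X(0,t,\cdot)$, and as a byproduct you get existence and uniqueness of the solution of the integral equation, which the paper leaves implicit. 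The paper instead introduces the two-parameter forward flow $\hat X(s,t,x)=x+\int_s^t v(\hat X(s,r,x),r)\,\diff r$, proves the semigroup identity $\hat X(s,t,x)=\hat X(\tau,t,\hat X(s,\tau,x))$, deduces the transport equation $\partial_s\hat X+v\cdot\nabla\hat X=0$, and then shows that $s\mapsto\hat X(s,t,\tilde X(s,t,x))$ is constant equal to $x$. Your argument is shorter and avoids differentiating $\hat X$ in its spatial argument (the paper's use of $\nabla\hat X$ tacitly assumes spatial smoothness of the generalized flow), at the cost of leaning explicitly on ODE uniqueness for terminal-value problems; the paper's construction is heavier for this single lemma but produces the two-parameter flow and its transport equation as reusable structure. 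Both arguments ultimately rest on the same regularity of $v$, so your proof is a valid, more elementary substitute.
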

\begin{proof}
We define a generalized flow map $\hat{X}:\mathbb{R}\times \mathbb{R}\times \mathbb{R}^n \to \mathbb{R}^n$ satisfying
\begin{equation}
\hat{X}(s,t,x) = x+ \int_s^t v(\hat{X}(s,r,x),r)\,\diff r. 
\label{defhatX}
\end{equation}
Then $\hat{X}(s,t,x)$ is the position at time $t$ of the particle that was at the position $x$ at time $s$. It is clear that $X(x,t)=\hat{X}(0,t,x).$

First, we can check that $\hat{X}$ has the semigroup  structure 
\begin{equation}
 \hat{X}(s,t,x)=\hat{X}(\tau,t,\hat{X}(s,\tau,x)) .
\label{jc220101}
\end{equation}

By definition of $\hat{X}$ in \eqref{defhatX}, the right hand side of \eqref{jc220101} can be expressed as
\begin{equation}
\hat{X}(\tau,t,\hat{X}(s,\tau,x))= \hat{X}(s,\tau,x)+\int_{\tau}^t v(\hat{X}(\tau,u,\hat{X}(s,\tau,x)),u)\,\diff u.
\label{jc220102}
\end{equation}
We differentiate \eqref{jc220102} with respect to $\tau$ to get 
\begin{equation*}
\begin{split}
\partial_{\tau}(\hat{X}(\tau,t,\hat{X}(s,\tau,x))) &= \partial_{\tau}\hat{X}(s,\tau,x) - v(\hat{X}(\tau,\tau,\hat{X}(s,\tau,x)),\tau)=\\
&= v(\hat{X}(s,\tau,x),\tau)-v(\hat{X}(s,\tau,x),\tau) = 0.
\end{split}
\end{equation*}
So $\hat{X}(\tau,t,\hat{X}(s,\tau,x))$ does not depend on $\tau$ and hence
$$\hat{X}(\tau,t,\hat{X}(s,\tau,x))=\hat{X}(\tau,t,\hat{X}(s,\tau,x))|_{\tau=t} = \hat{X}(t,t,\hat{X}(s,t,x)) =\hat{X}(s,t,x),$$
which proves equation \eqref{jc220101}.

Secondly, we want to see that $\hat{X}$ satisfies a transport equation. Differentiating \eqref{jc220101} with respect to $s$ we obtain
\begin{equation}
\begin{split}
\partial_s \hat{X}(s,t,x) &= \nabla \hat{X}(\tau,t,\hat{X}(s,\tau,x))\partial_s \hat{X}(s,\tau,x) = \\
&=-\nabla \hat{X}(\tau,t,\hat{X}(s,\tau,x))v(\hat{X}(s,s,x),s) =\\
&= -\nabla \hat{X}(\tau,t,\hat{X}(s,\tau,x))v(x,s), 
\end{split}
\label{jc220103}
\end{equation}
and putting $\tau=s$ in \eqref{jc220103}
\begin{equation}
\partial_s\hat{X}(s,t,x)+v(x,s) \nabla \hat{X}(s,t,x)=0. 
\label{jcjan230100}
\end{equation}
Thus, $\hat{X}(\cdot,t,\cdot):\mathbb{R}\times \mathbb{R}^n \to \mathbb{R}^n$ satisfies a transport equation.

Then, we define a map $\tilde{X}:\mathbb{R}\times\mathbb{R}\times \mathbb{R}^n$ via
\begin{equation}
\tilde{X}(s,t,x) = x-\int_s^t v(\tilde{X}(r,t,x),r)\,\diff r.
\label{jcjan230101}
\end{equation}
We want to check that for any $t,s\in\mathbb{R}$ and any $x\in\mathbb{R}^n$ the maps $\hat{X}$ and $\tilde{X}$ are inverse in the following sense
\begin{equation}
\hat{X}(s,t,\tilde{X}(s,t,x)) = x.
\label{jcjan230102}
\end{equation}
In order to prove \eqref{jcjan230102} we differentiate its left hand side with respect to $s$
$$\partial_s \hat{X}(s,t,\tilde{X}(s,t,x)) + \nabla \hat{X}(s,t,\tilde{X}(s,t,x))\partial_s \tilde{X}(s,t,x).$$
By \eqref{jcjan230101} we can see that $\partial_s\tilde{X}(s,t,x) = v(\tilde{X}(s,t,x),s)$ and then the above expression can be written as
\begin{equation*}
\begin{split}
\partial_s &\hat{X}(s,t,\tilde{X}(s,t,x)) + v(\tilde{X}(s,t,x),s)\nabla \hat{X}(s,t,\tilde{X}(s,t,x)) = \\
&=\left[\partial_s \hat{X}(\cdot,t,\cdot)+v(\cdot,\cdot)\nabla \hat{X}(\cdot,t,\cdot) \right](x,\tilde{X}(s,t,x)) = 0
\end{split}
\end{equation*}
 since $\hat{X}(\cdot,t,\cdot)$ satisfies the transport equation \eqref{jcjan230100}.

Thus, the left hand side of \eqref{jcjan230102} does not depend on $s$ and hence
$$\hat{X}(s,t,\tilde{X}(s,t,x)) = \hat{X}(t,t,\tilde{X}(t,t,x)) = x, $$
as we wanted to prove.

Taking $s=0$ we get $\hat{X}(0,t,\tilde{X}(0,t,x))= X(\tilde{X}(0,t,x),t),$ so \linebreak$X(x,t)^{-1} = \tilde{X}(s,t,x)|_{s=0}$ and the Lemma is proved.
\end{proof}
As a consequence of the previous Lemma, we see that the flow map $X(\cdot,t)$ and its inverse $X^{-1}(\cdot,t)$ share  the same regularity  properties since its integral expressions are similar. We will use this fact later on.

The next lemma is a well known and very classical fact in the theory of ordinary differential differential equations. For its proof see, for instance, \linebreak\cite[p. 13]{pachpatte}.
\begin{lemma}[Gronwall Lemma]
Let $u$ and $f$ be continuous and nonnegative functions defined on $I=[a,b]$, and let $n$ be a continuous, positive and non-decreasing function defined on $I$. If
$$u(t) \le n(t) + \int_a^t f(s)u(s)\,\diff s $$
 for $t\in I$, then
$$u(t) \le n(t) \exp\left(\int_a^t f(s)\,\diff s\right). $$
\label{gronwall}
\end{lemma}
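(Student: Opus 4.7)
The plan is to reduce to the case $n \equiv 1$ by exploiting positivity and monotonicity of $n$, and then apply the classical integrating-factor trick.

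Since $n(t) > 0$ on $[a,b]$, I would set $w(t) := u(t)/n(t) \ge 0$. Dividing the hypothesis by $n(t)$ and using that $n$ is nondecreasing, so $1/n(t) \le 1/n(s)$ for $a \le s \le t$, I obtain
$$w(t) \le 1 + \int_a^t f(s)\,\frac{u(s)}{n(t)}\,\diff s \le 1 + \int_a^t f(s)\,w(s)\,\diff s.$$
This reduces the problem to the following statement: if $w$ is continuous and nonnegative with $w(t) \le 1 + \int_a^t f(s)w(s)\,\diff s$ on $[a,b]$, then $w(t) \le \exp(\int_a^t f(s)\,\diff s)$.

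For this reduced claim I would set $\phi(t) := 1 + \int_a^t f(s)w(s)\,\diff s$. Since $f$ and $w$ are continuous, $\phi \in C^1([a,b])$ with $\phi(a) = 1$, $\phi \ge 1$ on $[a,b]$, $w \le \phi$ everywhere, and
$$\phi'(t) = f(t)\,w(t) \le f(t)\,\phi(t).$$
Multiplying by the integrating factor $\exp(-\int_a^t f(s)\,\diff s)$, the function $\Psi(t) := \phi(t) \exp(-\int_a^t f(s)\,\diff s)$ satisfies $\Psi'(t) \le 0$, hence $\Psi(t) \le \Psi(a) = 1$, which means $\phi(t) \le \exp(\int_a^t f(s)\,\diff s)$. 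Combining with $w \le \phi$ and multiplying by $n(t)$,
$$u(t) = n(t)\,w(t) \le n(t)\,\exp\!\left(\int_a^t f(s)\,\diff s\right),$$
as claimed.

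The only genuinely delicate point is the initial reduction. The hypothesis does not assume $n$ is differentiable, so a direct integrating-factor calculation applied to $u$ itself is not available. The monotonicity of $n$ is exactly what allows the factor $1/n(t)$ to be pushed inside the integral (replacing it by the larger $1/n(s)$), thereby renormalizing $u$ by its majorant and producing a clean integral inequality with constant right-hand term $1$. Once this reduction is in place, the remainder is the standard textbook argument; the positivity of $n$ (not merely nonnegativity) is what guarantees $w$ is well defined.
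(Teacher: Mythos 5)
Your proof is correct: the division by $n(t)$ combined with monotonicity to get $w(t)\le 1+\int_a^t f(s)w(s)\,\diff s$, followed by the integrating-factor argument for the majorant $\phi$, is exactly the standard argument. The paper itself does not prove this lemma but cites Pachpatte (p.~13), where essentially this same reduction-plus-integrating-factor proof is given, so your approach coincides with the cited source.
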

We are ready to give a first condition ensuring the a priori $C^{1,\gamma}$ boundedness of the flow map at any  time. The following bounds are very similar to those obtained for the Euler equation. The only difference that appears in the proof is the fact that the measure of the support of $\rho(\cdot,t)$ is not constant over time, because the divergence is not zero in general.
\begin{proposition}
Let $X(\cdot,t)$ be the  solution given in Theorem \ref{thmlocaltraj}  and $c(n)$ a constant depending on the dimension $n$. Then, for a certain function \linebreak$G:\mathbb{R}\to \mathbb{R}^+$ with $G(t)<\infty$  whenever 
$\int_0^t \left|\left|\nabla v(\cdot,s)\right|\right|_{L^{\infty}} \,\diff s < \infty$, we have the following inequalities
\begin{equation*}
\begin{split}
&\left|X(0,t)\right| \le  c(n) m(\text{supp}(\rho_0))^{1/n} \left|\left| \rho_0\right|\right|_{L^{\infty}}\int_0^t \left|\left|\nabla X(\cdot,s)\right|\right|_{L^{\infty}}\,\diff s, \\
&\left|\left|\nabla X(\cdot,t)\right|\right|_{L^{\infty}} \le \exp\left(\int_0^t \left|\left|\nabla v(\cdot,s)\right|\right|_{L^{\infty}}\,\diff s\right),  \\
&\left|\nabla X(\cdot,t)\right|_{\gamma} \le G(t) \exp\left(c\int_0^t \left|\left|\nabla v(\cdot,s)\right|\right|_{L^{\infty}}\,\diff s\right).
\end{split}
\end{equation*}
In particular, $\left|X(\cdot,t)\right|_{1,\gamma}$ is bounded provided $\int_0^t \left|\left|\nabla v(\cdot,s)\right|\right|_{L^{\infty}} \,\diff s$ also is.
\label{propglobal1}
\end{proposition}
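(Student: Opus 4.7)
The plan is to establish each of the three inequalities by integrating the flow ODE \eqref{odeflow} (or its derivative in $\alpha$) and invoking the Gronwall lemma (Lemma~\ref{gronwall}). The first two estimates are essentially immediate from tools already developed; the main obstacle will be showing that the function $G(t)$ in the third bound is finite whenever $\int_0^t \left|\left|\nabla v(\cdot,s)\right|\right|_{L^{\infty}}\,\diff s$ is, which requires bootstrapping through the regularity of the inverse flow $X^{-1}(\cdot,t)$ to control $\left|\nabla v(\cdot,s)\right|_{\gamma}$.

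For the first bound, I would integrate $\frac{\diff}{\diff t}X(0,t)=v(X(0,t),t)$ against $t$ and apply the $L^{\infty}$ estimate \eqref{4546a} of Lemma~\ref{lemmas4546} to $v(\cdot,s)=k\ast\rho(\cdot,s)$. This yields $\left|\left|v(\cdot,s)\right|\right|_{L^{\infty}}\le c R(s)\left|\left|\rho_0\right|\right|_{L^{\infty}}$ with $R(s)=m(\mathrm{supp}(\rho(\cdot,s)))^{1/n}$, and Lemma~\ref{lemmasupport} bounds $R(s)$ by $c(n)\,m(\mathrm{supp}(\rho_0))^{1/n}\left|\left|\nabla X(\cdot,s)\right|\right|_{L^{\infty}}$, giving the stated inequality after integration. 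For the second bound, differentiating the flow ODE in $\alpha$ gives $\frac{\diff}{\diff t}\nabla X(\alpha,t)=\nabla v(X(\alpha,t),t)\,\nabla X(\alpha,t)$; taking $L^{\infty}$ norm in $\alpha$ and applying Gronwall with initial data $\nabla X(\cdot,0)=\mathrm{Id}$ is then immediate.

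For the third bound, I would study the ODE satisfied by $\nabla X(\alpha,t)-\nabla X(\beta,t)$ and split
$$\nabla v(X(\alpha,t),t)\nabla X(\alpha,t)-\nabla v(X(\beta,t),t)\nabla X(\beta,t)$$
into the two pieces $[\nabla v(X(\alpha,t),t)-\nabla v(X(\beta,t),t)]\nabla X(\alpha,t)$ and $\nabla v(X(\beta,t),t)[\nabla X(\alpha,t)-\nabla X(\beta,t)]$. The first piece is estimated by $\left|\nabla v(\cdot,t)\right|_{\gamma}\left|\left|\nabla X(\cdot,t)\right|\right|_{L^{\infty}}^{1+\gamma}\left|\alpha-\beta\right|^{\gamma}$, using $\left|X(\alpha,t)-X(\beta,t)\right|\le\left|\left|\nabla X(\cdot,t)\right|\right|_{L^{\infty}}\left|\alpha-\beta\right|$, while the second is bounded by $\left|\left|\nabla v(\cdot,t)\right|\right|_{L^{\infty}}|\nabla X(\alpha,t)-\nabla X(\beta,t)|$. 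Dividing by $\left|\alpha-\beta\right|^{\gamma}$, taking the supremum and applying Gronwall yields the desired inequality, with $G(t)$ equal, up to constants, to $\int_0^t\left|\nabla v(\cdot,s)\right|_{\gamma}\left|\left|\nabla X(\cdot,s)\right|\right|_{L^{\infty}}^{1+\gamma}\,\diff s$.

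The hard part is then to verify that this $G(t)$ is finite whenever $\int_0^t\left|\left|\nabla v(\cdot,s)\right|\right|_{L^{\infty}}\,\diff s$ is. By Lemma~\ref{distrder} and bound \eqref{4546c}, $\left|\nabla v(\cdot,s)\right|_{\gamma}\le c\left|\rho(\cdot,s)\right|_{\gamma}$. Since $\rho(\cdot,s)=\rho_0\circ X^{-1}(\cdot,s)$, bound \eqref{420} gives $\left|\rho(\cdot,s)\right|_{\gamma}\le\left|\rho_0\right|_{\gamma}\left|\left|\nabla X^{-1}(\cdot,s)\right|\right|_{L^{\infty}}^{\gamma}$. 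Finally, Lemma~\ref{lemmainverseflow} presents $X^{-1}(\cdot,s)$ as the solution of an integral equation of the same type as $X(\cdot,s)$, so the argument used to derive the second inequality applies verbatim to yield $\left|\left|\nabla X^{-1}(\cdot,s)\right|\right|_{L^{\infty}}\le\exp\!\left(\int_0^s\left|\left|\nabla v(\cdot,r)\right|\right|_{L^{\infty}}\,\diff r\right)$. Combining these with the bound already established on $\left|\left|\nabla X(\cdot,s)\right|\right|_{L^{\infty}}$, the integrand defining $G(t)$ is controlled by a constant times $\exp\!\left(c\int_0^s\left|\left|\nabla v(\cdot,r)\right|\right|_{L^{\infty}}\,\diff r\right)$, so $G(t)$ is finite under the hypothesis, which completes the plan.
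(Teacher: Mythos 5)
Your proposal is correct and follows essentially the same route as the paper: integrating the flow ODE with the $L^{\infty}$ estimate \eqref{4546a} and Lemma \ref{lemmasupport} for the first bound, differentiating in $\alpha$ and applying Gronwall for the second, and, for the third, bounding $\frac{\diff}{\diff t}\left|\nabla X(\cdot,t)\right|_{\gamma}$ by the same two pieces (the paper obtains your explicit split of the source term by citing \eqref{416}, then uses \eqref{420}, \eqref{4546c}, and Lemma \ref{lemmainverseflow} exactly as you do to show that the Gronwall forcing term $G(t)$ is finite). The only discrepancy is inconsequential: you correctly invoke \eqref{4546c} for $\left|\nabla v(\cdot,t)\right|_{\gamma}\le c\left|\rho(\cdot,t)\right|_{\gamma}$, whereas the paper has a typo citing \eqref{4546b} there.
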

\begin{proof}
By definition \eqref{odeflow} we have, for any $\alpha \in \mathbb{R}^n$,
$$X(\alpha,t) = \alpha+ \int_0^t v(X(\alpha,s),s)\,\diff s.$$
Setting $\alpha=0$ and taking absolute value we get
$$\left|X(0,t)\right| = \left|\int_0^t v(X(0,s),s)\,\diff s\right| \le \int_0^t \left|\left| v(X(\cdot,s),s)\right|\right|_{L^{\infty}}\,\diff s.$$
Since $X(\cdot,s)$ is an homeomorphism then $\left|\left| v(X(\cdot,s),s)\right|\right|_{L^{\infty}} = \left|\left| v(\cdot,s)\right|\right|_{L^{\infty}}$ and therefore we simply have
$$\left|X(0,t)\right|  \le \int_0^t \left|\left| v(\cdot,s)\right|\right|_{L^{\infty}}\,\diff s.$$

Now let $R(s) = m(\text{supp}(\rho(\cdot,s)))^{1/n}$. Then as $v(\cdot,s) = k\ast \rho(\cdot,s)$ by \eqref{4546a} we get the bound
$$ \left|\left| v(\cdot,s)\right|\right|_{L^{\infty}} \le c R(s)\left|\left| \rho(\cdot,s)\right|\right|_{L^{\infty}}= c R(s) \left|\left| \rho_0\right|\right|_{L^{\infty}},$$
where the last equality stands provided $\rho$ is transported with the flow and so, the $L^{\infty}$ norm is conserved in time. 

By Lemma \ref{lemmasupport} we have a control for $R(s)$ and then
$$\left|X(0,t)\right| \le  c(n) m(\text{supp}(\rho_0))^{1/n} \left|\left| \rho_0\right|\right|_{L^{\infty}}\int_0^t \left|\left|\nabla X(\cdot,s)\right|\right|_{L^{\infty}}\,\diff s. $$

In order to achieve bounds on derivatives of the flow map, we compute the partial derivative with respect to $\alpha_i$ (denoted as $\partial_i$ from now on) of the $j$-th component of \eqref{odeflow}. By the chain rule, we get

\begin{equation}
\begin{split}
\frac{\diff}{\diff t}(\partial_i X_j(\alpha,t)) &= \sum_{k=1}^n \frac{\partial v_j(X(\alpha,t),t)}{\partial X_k(\alpha,t)} \partial_i X_k(\alpha,t) =\\
&= \nabla v_j(X(\alpha,t))\cdot \partial_iX(\alpha,t), 
\end{split}
\label{jcjan2}
\end{equation}
where $\cdot$ stands for the scalar product between vectors in $\mathbb{R}^n$.

Taking $L^{\infty}$ norm on \eqref{jcjan2} and considering supremum over $i, j$ we get

\begin{equation}
\begin{split}
\frac{\diff}{\diff t} \left|\left|\nabla X(\cdot,t)\right|\right|_{L^{\infty}} &\le \left|\left|\nabla v(X(\cdot,t),t)\right|\right|_{L^{\infty}}\left|\left|\nabla X(\cdot,t)\right|\right|_{L^{\infty}}=\\
&=\left|\left|\nabla v(\cdot,t)\right|\right|_{L^{\infty}}\left|\left|\nabla X(\cdot,t)\right|\right|_{L^{\infty}}.
\end{split}
\label{jcjan01}
\end{equation}
Therefore, by direct integration on \eqref{jcjan01} we have the desired bound
\begin{equation}
\left|\left|\nabla X(\cdot,t)\right|\right|_{L^{\infty}} \le \exp\left(\int_0^t \left|\left|\nabla v(\cdot,s)\right|\right|_{L^{\infty}}\,\diff s\right). 
\label{jcjan260101}
\end{equation}

 Finally, taking the $\left|\cdot\right|_{\gamma}$ seminorm and considering supremum over $i, j$ on \eqref{jcjan2} we have
\begin{equation}
\begin{split}
\frac{\diff }{\diff t}&\left|\nabla X(\cdot,t)\right|_{\gamma} \le  \sup_{i,j}\left|\nabla v_j(X(\cdot,t)) \cdot\partial_iX(\cdot,t)\right|_{\gamma} \le \\
&\le c\left( \left|\left|\nabla v(\cdot,t)\right|\right|_{L^{\infty}} \left|\nabla X(\cdot,t)\right|_{\gamma}+\left|\nabla v(X(\cdot,t),t)\right|_{\gamma} \left|\left|\nabla X(\cdot,t)\right|\right|_{L^{\infty}}\right),
\end{split}
\label{jc1.2}
\end{equation}
where we have used inequality \eqref{416} to bound the $\left|\cdot\right|_{\gamma}$ seminorm of a product.

By \eqref{420} we have
\begin{equation}
\left|\nabla v(X(\cdot,t),t)\right|_{\gamma} \le \left|\nabla v(\cdot,t)\right|_{\gamma} \left|\left|\nabla X(\cdot,t)\right|\right|_{L^{\infty}}^{\gamma}.
\label{jcjan2b}
\end{equation}

Also, by \eqref{4546b},
\begin{equation}
\left|\nabla v(\cdot,t)\right|_{\gamma} \le c \left|\rho(\cdot,t)\right|_{\gamma}\le c \left|\rho_0\right|_{\gamma}  \left|\left|\nabla X^{-1}(\cdot,t)\right|\right|_{L^{\infty}}^{\gamma},
\label{jcjandollar}
\end{equation}
where we have used that $\rho(\cdot,t) = \rho_0(X^{-1}(\cdot,t))$.
Using the equation for $X^{-1}(\cdot,t)$ described in Lemma \ref{lemmainverseflow} and similarly as done in \eqref{jcjan01} we have
\begin{equation}
\left|\left|\nabla X^{-1}(\cdot,t)\right|\right|_{L^{\infty}} \le \exp\left(\int_0^t \left|\left|\nabla v(\cdot,s)\right|\right|_{L^{\infty}} \,\diff s\right).
\label{jc230104}
\end{equation}

Combining inequalities \eqref{jcjan01}, \eqref{jcjan2b}, \eqref{jcjandollar}, \eqref{jc230104}, we get a bound for \eqref{jc1.2}

\begin{equation}
\begin{split}
\frac{\diff }{\diff t}\left|\nabla X(\cdot,t)\right|_{\gamma} &\le c\left(\left|\left|\nabla v(\cdot,t)\right|\right|_{L^{\infty}} \left|\nabla X(\cdot,t)\right|_{\gamma}+\right.\\&+\left.\left|\rho_0\right|_{\gamma}\exp\left((1+2\gamma)\int_0^t \left|\left|\nabla v(\cdot,s)\right|\right|_{L^{\infty}} \,\diff s\right)\right).
\end{split}
\label{jc1.2b}
\end{equation}

Setting
\begin{equation*}
\begin{split}
g(t):=c\left|\rho_0\right|_{\gamma}\exp\left((1+2\gamma)\int_0^t \left|\left|\nabla v(\cdot,s)\right|\right|_{L^{\infty}} \,\diff s\right)
\end{split}
\end{equation*}
and $G(t):=\int_0^t g(s)\,\diff s $, and then applying Lemma \ref{gronwall} to \eqref{jc1.2b} we  get
$$\left|\nabla X(\cdot,t)\right|_{\gamma} \le G(t)\exp\left(c\int_0^t \left|\left|\nabla v(\cdot,s)\right|\right|_{L^{\infty}}\,\diff s\right),$$
which completes the proof of the proposition.
\end{proof}
Having proved the inequalities in Proposition \ref{propglobal1} we can see that, in fact, the $C^{1,\gamma}$ norm of the flow is finite for any  time which was our first goal.
\begin{proposition}
Let $X(\cdot,t)$ be the  solution given in Theorem \ref{thmlocaltraj}. Then $\left|X(\cdot,t)\right|_{1,\gamma}$ is finite for any time.
\label{bkm0}
\end{proposition}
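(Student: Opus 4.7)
The plan is to reduce the statement, via the last sentence of Proposition \ref{propglobal1}, to proving that the quantity
$$\Phi(t):=\int_0^t \left|\left|\nabla v(\cdot,s)\right|\right|_{L^{\infty}}\,\diff s$$
is finite for every $t$. This is a Beale--Kato--Majda type a priori estimate: we will close a Gronwall inequality for $\Phi$ in which the non-linear terms enter only logarithmically.

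The first step is to obtain a logarithmic pointwise bound on $\left|\left|\nabla v(\cdot,s)\right|\right|_{L^{\infty}}$. Since $v=k\ast \rho$, Lemma \ref{distrder} gives $\partial_i v_j = \mathrm{p.v.}\,\partial_i k_j \ast \rho + c_{ij}\rho$. The second piece is harmless because $\rho(\cdot,s)=\rho_0\circ X^{-1}(\cdot,s)$ has $L^\infty$ norm equal to $\left|\left|\rho_0\right|\right|_{L^{\infty}}$. For the first piece I apply \eqref{4546b} of Lemma \ref{lemmas4546} with $P=\partial_i k_j$ and $f=\rho(\cdot,s)$, choosing the free parameter $\varepsilon$ so that the two terms balance: $\varepsilon^{\gamma}=\left|\left|\rho_0\right|\right|_{L^{\infty}}/\left|\rho(\cdot,s)\right|_{\gamma}$. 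This yields, for some constant $C=C(n,\gamma,\rho_0)$,
$$\left|\left|\nabla v(\cdot,s)\right|\right|_{L^{\infty}} \le C\left(1+\log^{+} \left|\rho(\cdot,s)\right|_{\gamma} + \log^{+} R(s)\right),$$
where $R(s)=m(\mathrm{supp}\,\rho(\cdot,s))^{1/n}$.

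The second step is to control the two logarithms using Proposition \ref{propglobal1} and Lemma \ref{lemmasupport}. By \eqref{420} applied to $\rho(\cdot,s)=\rho_0\circ X^{-1}(\cdot,s)$,
$$\left|\rho(\cdot,s)\right|_{\gamma} \le \left|\rho_0\right|_{\gamma}\left|\left|\nabla X^{-1}(\cdot,s)\right|\right|_{L^{\infty}}^{\gamma},$$
and Lemma \ref{lemmainverseflow} together with the argument producing \eqref{jc230104} give $\left|\left|\nabla X^{-1}(\cdot,s)\right|\right|_{L^{\infty}}\le \exp(\Phi(s))$. Similarly, Lemma \ref{lemmasupport} combined with the second inequality of Proposition \ref{propglobal1} yields $R(s)\le c(n) R_0\exp(\Phi(s))$. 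Taking logarithms,
$$\log^{+}\left|\rho(\cdot,s)\right|_{\gamma}+\log^{+} R(s) \le C' + C''\,\Phi(s),$$
so plugging into the estimate from the previous paragraph produces an inequality of the form
$$\Phi'(s)=\left|\left|\nabla v(\cdot,s)\right|\right|_{L^{\infty}} \le A + B\,\Phi(s),$$
with $A,B$ depending only on $n,\gamma,\rho_0$.

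The final step is to apply Gronwall's Lemma \ref{gronwall} to this linear differential inequality to conclude that $\Phi(t)\le \frac{A}{B}(e^{Bt}-1)<\infty$ for every finite $t$. By the last assertion of Proposition \ref{propglobal1}, this implies $\left|X(\cdot,t)\right|_{1,\gamma}<\infty$ for all $t$, which is exactly the statement. The subtle point of the argument, and the one I expect to require the most care, is the optimization in $\varepsilon$ that produces the \emph{logarithmic} (and not polynomial) dependence on $\left|\rho(\cdot,s)\right|_{\gamma}$; without this, the Gronwall step would give only a doubly exponential bound and the ODE for $\Phi$ could blow up in finite time.
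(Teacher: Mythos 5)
Your proposal is correct and follows essentially the same route as the paper: reduce to bounding $\int_0^t \left|\left|\nabla v(\cdot,s)\right|\right|_{L^{\infty}}\,\diff s$ via Proposition \ref{propglobal1}, use Lemma \ref{distrder} together with the logarithmic estimate \eqref{4546b} (with a suitable choice of $\varepsilon$), control $\left|\rho(\cdot,s)\right|_{\gamma}$ and $R(s)$ by the exponential flow bounds and Lemma \ref{lemmasupport}, and close with Gronwall. The only differences are cosmetic: the paper takes $\varepsilon=\left|\rho(\cdot,t)\right|_{\gamma}^{-1/\gamma}$ rather than your balanced choice, and it applies Gronwall to $\left|\left|\nabla v(\cdot,t)\right|\right|_{L^{\infty}}$ directly instead of to the integral $\Phi$, which is the same linear inequality.
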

\begin{proof}
By Proposition \ref{propglobal1} $\left|X(\cdot,t)\right|_{1,\gamma}$ is finite provided $\int_0^t \left|\left|\nabla v(\cdot,s)\right|\right|_{L^{\infty}}\,\diff s$ is. Then it suffices to check that this integral is bounded for any  time. Let $i,j \in \left\{1, \ldots ,n\right\}$. By Lemma \ref{distrder}  we have 
\begin{equation}
\partial_i v_j(\cdot,t) = c_{ij} \rho(\cdot,t) + p.v.\partial_i k_j \ast \rho(\cdot,t),
\label{jc010201}
\end{equation}
where $c_{ij}$ are defined in \eqref{defcij}. Let $\varepsilon = \left|\rho(\cdot,t)\right|_{\gamma}^{-1/\gamma}$ and let $R(t) = m(\text{supp}(\rho(\cdot,t)))^{1/n}$ and apply inequality \eqref{4546b} to the equation \eqref{jc010201} to obtain
\begin{equation}
\left|\left|\partial_iv_j(\cdot,t)\right|\right|_{L^{\infty}} \le c\{1+\ln[R(t)\left|\rho(\cdot,t)\right|_{\gamma}^{1/\gamma}]\left|\left|\rho_0\right|\right|_{L^{\infty}}\}.
\label{jc0102(8)}
\end{equation}
Since $\rho(\cdot,t)=\rho_0(X^{-1}(\cdot,t))$ then $\left|\rho(\cdot,t)\right|_{\gamma}\le \left|\rho_0\right|_{\gamma} \left|\left|\nabla X(\cdot,t)\right|\right|_{L^{\infty}}^{\gamma}$. Also, taking into account Lemma \ref{lemmasupport} we get
$$R(t)\left|\rho(\cdot,t)\right|_{\gamma}^{1/\gamma} \le c(n)m(\text{supp}(\rho_0))^{1/n}\left|\rho_0\right|_{\gamma}^{1/\gamma} \left|\left|\nabla X(\cdot,t)\right|\right|_{L^{\infty}}^2.$$
By Proposition \ref{propglobal1} we can bound $\left|\left|\nabla X(\cdot,t)\right|\right|_{L^{\infty}}$ and then 
$$R(t)\left|\rho(\cdot,t)\right|_{\gamma}^{1/\gamma} \le c(n)m(\text{supp}(\rho_0))^{1/n}\left|\rho_0\right|_{\gamma}^{1/\gamma} \exp\left(2\int_0^t \left|\left|\nabla v(\cdot,s)\right|\right|_{L^{\infty}}\,\diff s\right).$$
Therefore inequality \eqref{jc0102(8)} can be written as
$$ \left|\left|\partial_iv_j(\cdot,t)\right|\right|_{L^{\infty}} \le c(n,\rho_0)+c\int_0^t \left|\left|\nabla v(\cdot,s)\right|\right|_{L^{\infty}}\,\diff s. $$
Taking supremum over $i,j\in \{1,\ldots,n\}$
$$ \left|\left|\nabla v(\cdot,t)\right|\right|_{L^{\infty}} \le c(n,\rho_0)+c\int_0^t \left|\left|\nabla v(\cdot,s)\right|\right|_{L^{\infty}}\,\diff s. $$
 and applying Gronwall's Lemma \eqref{gronwall} we finally get
$$\left|\left|\nabla v(\cdot,t)\right|\right|_{L^{\infty}} \le c(n,\rho_0)\exp(ct),$$
which is finite for any  time.
\end{proof}

Finally, as we anticipated at the beginning of the section, using the a priori bound for the flow and by the Continuation Theorem \ref{continuation} we can prove that the  solution $X(\cdot,t)$ is global in time.
\begin{theorem}
Let $\rho_0\in C^{\gamma}_c(\mathbb{R}^n;\mathbb{R})$. Then the ordinary differential equation
\begin{equation*}
\begin{cases}
\frac{\diff }{\diff t} X(\alpha,t)=F(X(\alpha,t)),\\
X(\alpha,0) = \alpha,
\end{cases}
\end{equation*}
for $$F(X(\alpha,t))=\int_{\mathbb{R}^n} k(X(\alpha,t)-X(\alpha',t))\rho_0(\alpha')\,\det[DX(\alpha',t)]\,\diff \alpha'$$
has a unique solution  $X(\cdot,t)\in C^{1,\gamma}(\mathbb{R}^n;\mathbb{R}^n)$ for any  time $t\in\mathbb{R}$.
\label{thmglobaltraj}
\end{theorem}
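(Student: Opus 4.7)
The plan is to combine the a priori bound of Proposition \ref{bkm0} with the Continuation Theorem \ref{continuation} applied to the local existence result of Theorem \ref{thmlocaltraj}. Concretely, given an arbitrary finite $T>0$, I will exhibit an $M=M(T)>1$ such that the local solution $X(\cdot,t)$ produced by Theorem \ref{thmlocaltraj} remains in $O_M$ for all $t\in[0,T]$; since the continuation theorem asserts that the only obstruction to global existence is the trajectory leaving the open set $O_M$, this will force $T^*\ge T$, and since $T$ was arbitrary the solution is global.

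The first step is to fix $T>0$ and apply Proposition \ref{bkm0} on $[0,T]$. This already yields a finite bound $\left|X(\cdot,t)\right|_{1,\gamma}\le C_1(T)$, so the $C^{1,\gamma}$ part of membership in $B$ is never lost. In particular, the pointwise upper bilipschitz constant satisfies
$$\sup_{\alpha\neq\beta} \frac{|X(\alpha,t)-X(\beta,t)|}{|\alpha-\beta|}\le \left\|\nabla X(\cdot,t)\right\|_{L^\infty}\le C_2(T),$$
using the exponential bound from Proposition \ref{propglobal1} together with the $L^\infty$ bound on $\nabla v$ established in the proof of Proposition \ref{bkm0}.

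For the lower bilipschitz bound, I appeal to Lemma \ref{lemmainverseflow}: the inverse flow $X^{-1}(\cdot,t)$ solves an integral equation of the same structural type as the one for $X(\cdot,t)$, driven by the same velocity field $v$. Repeating the argument of Proposition \ref{propglobal1} literally for $X^{-1}$ in place of $X$ yields
$$\left\|\nabla X^{-1}(\cdot,t)\right\|_{L^\infty}\le \exp\left(\int_0^t \left\|\nabla v(\cdot,s)\right\|_{L^\infty}\,\diff s\right)\le C_3(T),$$
since the right hand side is precisely the integral controlled in Proposition \ref{bkm0}. Because
$$\inf_{\alpha\neq\beta}\frac{|X(\alpha,t)-X(\beta,t)|}{|\alpha-\beta|}\ge \frac{1}{\left\|\nabla X^{-1}(\cdot,t)\right\|_{L^\infty}}\ge \frac{1}{C_3(T)},$$
the bilipschitz constants are controlled from both sides uniformly for $t\in[0,T]$.

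Choosing $M=M(T)$ larger than $\max(C_2(T),C_3(T))+1$, we obtain $X(\cdot,t)\in O_M$ for every $t\in[0,T]$, and similarly for negative times by symmetry of the argument. By the Continuation Theorem \ref{continuation}, the local solution of Theorem \ref{thmlocaltraj} extends past $T$, and as $T$ was arbitrary the solution is defined for every $t\in\mathbb{R}$; uniqueness is inherited from the Picard--Lindelöf uniqueness at each step. I expect no serious obstacle: the heart of the argument was the a priori estimate of Proposition \ref{bkm0}, and the only subtlety here is verifying that the lower bilipschitz constant also stays bounded away from $0$, which the symmetric treatment of $X^{-1}$ via Lemma \ref{lemmainverseflow} handles cleanly.
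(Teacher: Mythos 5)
Your proposal is correct and follows the same route as the paper: the a priori $C^{1,\gamma}$ bound of Proposition \ref{bkm0} combined with the Continuation Theorem \ref{continuation}, with the upper bilipschitz constant controlled by $\left\|\nabla X\right\|_{L^\infty}$ and the lower one by $\left\|\nabla X^{-1}\right\|_{L^\infty}$. The one small difference is cosmetic: for $\left\|\nabla X^{-1}\right\|_{L^\infty}$ you invoke Lemma \ref{lemmainverseflow} and the exponential Gronwall bound, whereas the paper uses the algebraic estimate $\left\|\nabla X^{-1}\right\|_{L^\infty}\le \left|X\right|_{1,\gamma}^{2n-1}$ coming from \eqref{418}; both are already established in Proposition \ref{propglobal1}, so either works.

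There is one point you slide over that the paper makes explicit and that is worth being careful about: the local solution of Theorem \ref{thmlocaltraj} is constructed in some fixed $O_M$, but the $M(T)$ you need to contain $X(\cdot,t)$ for $t\in[-T,T]$ will in general be larger. To legitimately apply the Continuation Theorem with the enlarged set $O_{M(T)}$ in place of the original $O_M$, one must observe that the estimates in Propositions \ref{Fmaps} and \ref{Flips} do not depend on the value of $M$, so enlarging the open set changes neither the local solution nor the maximal interval produced by Picard--Lindel\"of. This is a short remark, but without it the sentence ``the local solution produced by Theorem \ref{thmlocaltraj} remains in $O_M$'' quietly conflates two different choices of $M$. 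Adding that observation would close the argument completely.
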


\begin{proof}
We want to apply Theorem \ref{continuation} in order to ensure the globalness of the solution $X(\cdot,t)$. So, we need to check that, for any given $T$, the map
$X(\cdot,T)$ belongs to $O_M$ where
\begin{equation*}
O_M = B\cap \left\{X:\mathbb{R}^n\to \mathbb{R}^n:\, \frac{1}{M}< \sup_{\alpha\neq \beta}\frac{\left|X(\alpha)-X(\beta)\right|}{\left|\alpha-\beta\right|} < M     \right\}. 
\end{equation*}
Let us first prove  that we can avoid to check that  the condition for $M$ is satisfied at time $T$. By the Mean Value Theorem,
$$\left|X(\alpha,t)-X(\beta,t)\right| \le \left|\left| \nabla X(\cdot,t)\right|\right|_{L^{\infty}} \left|\alpha-\beta\right| \le \left|X(\cdot,t)\right|_{1,\gamma} \left|\alpha-\beta\right|,$$
and also since we can express $\alpha = X^{-1}((X(\alpha,t),t))$ and $\beta = X^{-1}((X(\beta,t),t))$,
$$ \left|\alpha-\beta\right| \le \left|\left|\nabla X^{-1}(\cdot,t)\right|\right|_{L^{\infty}} \left|X(\alpha,t)-X(\beta,t)\right| \le \left|X(\cdot,t)\right|_{1,\gamma}^{2n-1}\left|X(\alpha)-X(\beta)\right|.$$

Consider now $M'$ such that $$\sup_{t\in[-T,T]}\max\{\left|X(\cdot,t)\right|_{1,\gamma},\left|X(\cdot,t)\right|_{1,\gamma}^{2n-1}\}\le M' < \infty.$$ Such an $M'$ exists since $\left|X(\cdot,t)\right|_{1,\gamma}$ is finite for every time by Proposition \ref{bkm0}. For this choice of $M'$
it is sure that $X(\cdot,t)\in O_{M'}$ for every time $t\in[-T,T]$.

Throughout the proofs of Proposition \ref{Fmaps} and \ref{Flips} we can check that those statements are independent of $M$. Due to this independence we can modify $O_M$ to $O_{M'}$ without changing neither the solution nor the maximal time of existence given by Picard-Lindel\"of's theorem.

Then as soon as $\left|X(\cdot,t)\right|_{1,\gamma}$ is finite at time $T$, $X(\cdot,T)\in O_{M'}$. But we know that this is true by Proposition \ref{bkm0}. So $X(\cdot,T)\in O_{M'}$ and this does not depend on the choice of $T$ so we have existence and uniqueness of $X(\cdot,t)\in C^{1,\gamma}$ for any time $t$ by Theorem \ref{continuation}.
\end{proof}
As a direct consequence of  Theorems \ref{localrhov} and  \ref{thmglobaltraj} we have finally have  Theorem \ref{mainthm}.

\begin{acknowledgements} 
The author acknowledge support by
2017-SGR-395 (AGAUR, Generalitat de Cata\-lunya),
and MTM2016-75390 (MINECO, Spanish Government). The author also acknowledge professors Joan Mateu, Joan Orobitg and Joan Verdera for their priceless comments concerning the writing of this paper.
\end{acknowledgements}

\newpage

\vspace{0.5cm}
{\small
\begin{tabular}{@{}l}
J.C.\ Cantero,\\ Departament de Matem\`{a}tiques, Universitat Aut\`{o}noma de Barcelona.\\
{\it E-mail:} {\tt cantero@mat.uab.cat}\\*[5pt]

\end{tabular}}
\end{document}